\newlength{\defbaselineskip} \setlength{\defbaselineskip}{\baselineskip}
 \theoremstyle{plain} \newtheorem{thm}{Theorem}[section]
\newtheorem{pro}[thm]{Problem} \newtheorem{lemm}[thm]{Lemma}
\newtheorem{prop}[thm]{Proposition}
\theoremstyle{definition}
\newtheorem{rem}[thm]{Remark}
\newtheorem{obs}[thm]{Observation}
\newtheorem{cor}[thm]{Corollary}
 \numberwithin{equation}{section}
\numberwithin{equation}{section}
\theoremstyle{definition} 
\newtheorem*{Claim}{Claim}
\title{Projections of Mukai Varieties}
\author{Micha\l \ Kapustka}
\thanks{This project grew up while the author visited the
University of Oslo supported by an EEA Scholarship and Training fund
in Poland and by MNiSW grant N N201 388834}
\keywords{Mukai varieties, K3 surfaces, moduli of vector bundles.}
\subjclass[2000]{Primary: 14M17; Secondary: 14D20, 14J28}
\begin{document}

\begin{abstract}
This note is an answer to a problem proposed by Iliev and Ranestad. We prove
that the projections of general nodal
linear sections of suitable dimension of Mukai varieties $M_g$ are linear
sections of $M_{g-1}$.
\end{abstract}
\maketitle
\section{Introduction}
In \cite{Muk1} Mukai gave a description of general canonical curves, K3 surfaces
and Fano threefolds of sectional genus $g\leq 10$ in terms
of linear sections of appropriate varieties. For prime Fano threefolds of index 1
the description gives a full classification up to genus $g\leq 10$. It may be summarized
in the following table
\begin{longtable}{|c|c|}
\hline
 genus & Anti-canonical model of a prime Fano \\
&threefold of index $1$ and given genus\\
\hline
$2$& $X_6\subset \mathbb{P}(1^4,3)$\\
\hline
$3$& $X_4\subset \mathbb{P}^4$ \\
\hline
$4$& $X_{2,3}\subset \mathbb{P}^5$\\
\hline
$5$& $X_{2,2,2}\subset \mathbb{P}^6$\\
\hline
$6$& $X_{1,1}\subset Q_2\cap G(2,5)=: M_6^{5}$\\
\hline
$7$& $X_{1,1,1,1,1,1,1}\subset OG(5,10)=:M_7^{10}$\\
\hline
$8$& $X_{1,1,1,1,1}\subset G(2,6)=: M_8^8$\\
\hline
$9$& $X_{1,1,1}\subset LG(3,6)=: M_9^6$\\
\hline
$10$ & $X_{1,1} \subset G_2=: M_{10}^5$\\
\hline
\end{longtable}
In the table we use the notation $X_{i_1,..,i_n}$ for the generic complete
intersection of given degrees.
The variety $Q_2$ is a generic quadric hypersurface. The notation $G(2,n)$ stands
for the Grassmannians of lines in projective $n-1$-space
in their Pl\"ucker embeddings. The variety $OG(5,10)$ is the orthogonal
Grassmannian. It is a component of the set of linear spaces
of dimension $4$ contained in a smooth eight dimensional quadric hypersurface in
$\mathbb{P}^9$ in its spinor embedding. The
variety $LG(3,6)$ is the Lagrangian Grassmannian, it is a linear section of
$G(3,6)$ in its Pl\"ucker embedding
parametrizing 3-dimensional vector spaces isotropic with respect to a chosen generic symplectic
form. The variety $G_2$ is a linear section of $G(5,7)$ in its Pl\"ucker embedding parametrizing 5-dimensional vector subspaces
of a 7-dimensional vector space isotropic with respect to a chosen
generic four-form. The notation $M_g$ and the name Mukai varieties has become
common in this context. The upper index used in the table stands for the dimension of the variety and will be omitted.
We shall describe these varieties more precisely in section \ref{mukai varieties}. 

In fact there is only one more family of prime Fano threefolds of index 1. It corresponds to the case $g=12$.

It is now a natural problem to relate these Fano varieties by means of standard constructions such for example as projections.
In particular the following problem was considered in \cite{R,RI}.
\begin{pro}
For given $7 \leq g\leq 10$, what is the highest $n$ such that there exists a proper linear section $H$ of dimension $n$ of $M_g$ admitting 
a single ordinary double point $p$ as singularity and such that the projection of $H$ from $p$ is linearly isomorphic to a proper linear section of $M_{g-1}$. 
\end{pro}
The justification to proposing this problem is the observation that taking the projection of a nodal Fano manifold (K3 surface or canonical curve) of sectional genus $g$ from the node 
we still get a Fano manifold (K3 surface or canonical curve) but with sectional genus reduced by 1, hence the result should appear as a section of $M_{g-1}$. The only problem arising is that the resulting 
variety might again be (and in fact will always be) singular in which case Mukai's result does not work. 

As evidence in \cite{R} it was observed that the statement is true for $n=1$. Moreover an upper bound for $n$ was given, by computing the maximal dimension 
of quadrics contained in $M_{g-1}$ and observing that the result of the considered projection must contain a quadric divisor as the exceptional divisor of the projection.   

Observe moreover that $n$ can be arbitrarily large for an analogous problem formulated for $g\leq 5$.
More precisely we have the following observation:
\begin{obs}
For $2\leq g\leq 5$ and $n\in \mathbb{N}$ there exists a complete intersection $M$ of type $M_g$ (i.e. as in the Table) in the corresponding weighted projective space 
such that $M$ admits a single ordinary double point as singularity. Moreover for any such $M$ the projection from the node is linearly isomorphic 
to a complete intersection of type $M_{g-1}$. Conversely a generic complete intersection of type $M_{g-1}$ containing a smooth quadric as a codimension 1 subvariety
can be obtained in such a way. 
\end{obs}
Similarly for $g=6$.
\begin{obs}
There exists a quadric $Q$ such that $G(2,5)\cap Q$ has a single node.  Moreover for any such intersection the projection from the node is linearly isomorphic 
to a complete intersection of type $X_{2,2,2}$. Conversely a generic complete intersection $X_{2,2,2}$ containing a smooth quadric as a codimension 1 subvariety
can be obtained in such a way. 
\end{obs}

Indeed these are examples of standard Kustin Miller unprojections.

The case $g=9$ was solved in \cite{RI}. Before we state the theorem let us observe that the general singular hyperplane section of $LG(3,6)$ has a single node as singularity. 
Let now $L$ be any nodal hyperplane section of $LG(3,6)$ and $p$ its unique singularity.
\begin{thm}\label{z LG(3,6) w G(2,6)} The projection of $L$ from the node $p$ is a proper codimension 3 linear section of
$G(2,6)$, containing a 4 dimensional quadric. Conversely a general 5
dimensional linear section of $G(2,6)$ that contains a 4 dimensional
quadric arises in this way.
\end{thm}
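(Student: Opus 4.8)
My plan is to study the projection $\pi_p$ from the node $p$ of $L$, identify its image, and then realise the inverse passage as a Kustin--Miller unprojection. First I would fix $L = LG(3,6)\cap H$ with an ordinary double point at $p$ and record the local picture. Since $M_9=LG(3,6)$ is projectively normal and cut out by quadrics, the projectivized tangent cone of $L$ at $p$ is a smooth quadric $Q_0\subset\mathbb{P}(T_pL)$ of dimension $\dim L-1$. Blowing up $p$ resolves $\pi_p$: the exceptional divisor is $Q_0$, and it maps isomorphically onto a quadric $Q$ contained in the image $\bar L=\overline{\pi_p(L)}$, which has the same dimension as $L$. This immediately produces the quadric demanded by the statement and exhibits $\bar L$ as birational to $L$.

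The core of the forward direction is to show that $\bar L$ lies on $M_8=G(2,6)$, embedded linearly into the ambient projective space. I would argue through the equations. The ideal of $LG(3,6)$ is generated by the $Sp(6)$-module of quadrics, and I would split these according to their order of vanishing at $p$: those singular at $p$ descend to quadrics on $\bar L$, while those vanishing to order exactly one pair the projected-away coordinate (the Pl\"ucker-type coordinate singled out by $p$) with the remaining ones and, after elimination, reorganise into the Pl\"ucker quadrics of $G(2,6)$. More conceptually, the Lagrangian $3$-plane $\Lambda_0$ corresponding to $p$ is exactly the datum needed to present the target coordinates as the Pl\"ucker coordinates of $2$-planes in $\mathbb{C}^6$, so that $\pi_p$ is intertwined with the natural rational map to $G(2,6)$. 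Tracking dimensions then shows $\bar L$ is a linear section of $G(2,6)$ of the expected codimension containing the quadric $Q$.

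For the converse I would reverse this as an unprojection. Given a linear section $\bar L\subset G(2,6)$ containing a quadric $Q$ of codimension one in $\bar L$, both $\bar L$ and $Q$ are projectively Gorenstein, so the Kustin--Miller construction attaches a single new variable together with one unprojection equation and yields a variety $L$ mapping to $\bar L$ with exceptional locus $Q$; the point where all original coordinates vanish is an ordinary double point whose tangent cone is $Q$. It then remains to match the homogeneous coordinate ring of $L$ with that of a hyperplane section of $LG(3,6)$, the new coordinate playing the role of the Pl\"ucker coordinate removed by $H$, and to run a dimension count of the two families (nodal hyperplane sections of $LG(3,6)$ modulo projectivities, against linear sections of $G(2,6)$ carrying a quadric) to see the correspondence is generically bijective, so that every such section arises in this way. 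The main obstacle is the identification in the second paragraph: showing that the projected quadrics are \emph{exactly} the defining equations of $G(2,6)$, equivalently that the unprojection ring is the coordinate ring of $LG(3,6)$, is not formal and is precisely where the representation-theoretic relation between the equations of $M_9$ and $M_8$ must be exploited; checking that a \emph{general} member has an ordinary double point with smooth tangent quadric is a further genericity point that I would need to verify.
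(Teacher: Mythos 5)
Your overall plan for the forward direction --- project from the node and show that the image is cut out by the equations of $G(2,6)$ --- is the same in spirit as the paper's, but your proposal stops exactly where the actual proof has to begin. You yourself flag the decisive step (``showing that the projected quadrics are \emph{exactly} the defining equations of $G(2,6)$ \dots is not formal'') and leave it unexecuted; that identification is not a supporting verification, it is the entire content of the theorem. The paper closes this gap with two concrete moves that your proposal lacks. First, a normal-form reduction: the action of $Sp(6)$ on the dual $\mathbb{P}^{13}$ has only four orbits, and by the analysis in \cite{RI} nodal hyperplane sections correspond precisely to points of the open orbit of the dual variety; hence one may take the node at the point $u=1$ and the single explicit hyperplane $H=\{y_{2,2}=y_{1,3}\}$. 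This is what makes ``general nodal section'' amenable to one finite computation, and it simultaneously disposes of the genericity issue you defer at the end. Second, the identification itself: restricted to $H$, the equations of $LG(3,6)$ not involving $u$, together with the tangent-cone quadric $x_{1,2}x_{2,3}-x_{1,3}x_{2,2}=x_{1,1}x_{3,3}-x_{1,3}^2$, are rearranged explicitly into the $4\times 4$ Pfaffians of a concrete $6\times 6$ antisymmetric matrix in the thirteen remaining coordinates $z$, $x_{i,j}$, $y_{i,j}$ (with $x_{2,2}-x_{1,3}$ appearing as one entry), i.e.\ into the equations of a codimension-two linear section of $G(2,6)$. Your ``split the quadrics by order of vanishing at $p$ and reorganise after elimination'' correctly describes what must happen, but without a normal form for $H$ and without exhibiting the matrix (or an equivalent representation-theoretic argument) it is a restatement of the problem, not a proof.

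On the converse, your Kustin--Miller unprojection idea is genuinely different from the paper, which in fact never proves the converse within its own argument (that direction is inherited from \cite{RI}). Unprojection is a plausible route: the quadric inside the projected variety is projectively Gorenstein of codimension one, so the construction of \cite{RP} applies and produces a Gorenstein ring with one new variable and one unprojection equation. But the same gap recurs there: you must identify the unprojection ring with the homogeneous coordinate ring of a nodal hyperplane section of $LG(3,6)$, and this again requires exactly the explicit matching of equations that the proposal does not supply. As it stands, both directions of your argument reduce to the unproven identification, so the proposal has a genuine gap rather than constituting an alternative proof.
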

The proof followed from the construction of an appropriate bundle on the
resolution of a nodal hyperplane section
of $LG(3,6)$.

In this note we reprove Theorem \ref{z LG(3,6) w G(2,6)} together with the remaining
cases in purely algebraic terms by analysis of equations of considered varieties in terms of natural representations appearing on the linear spaces they span.

The original motivation of \cite{R,RI} for studying the problem was the construction of non-abelian Brill-Noether loci   
in moduli spaces of bundles over Mukai varieties. We pursue this idea in section \ref{modBNL}.

Our main focus however will be put on the understanding of the geometry of the constructions presented with a view toward future applications in
the theory of Mirror Symmetry and Landau-Ginzburg models. For this reason in section \ref{mir} we concentrate on the case of Fano 3-folds.
We prove that for a Fano 3-fold of genus $g$ admitting a single node its projection form the node is a Fano 3-fold of genus $g-1$ with also only 
nodes as singularities. We factorize the projection into a blow up of the node and a small contraction of lines and count the number of nodes obtained 
in each case. In this way we connect families of Fano 3-folds of genus $g$ in the simplest way from the point of view of the theory of Landau-Ginzburg models.

The analogue of this in the case of Calabi-Yau threefolds is a cascade of geometric bitransitions connecting Calabi-Yau threefolds from the list of Borcea (see \cite{Unpr}).

\begin{section}{Statements}
The main results of the paper may be summarized as follows 
\begin{thm}\label{z S10 w G(2,5)}
The subcheme of $G(10,15)$ parametrizing singular linear sections of $M_{7}$ is
irreducible.
The general element of this subcheme corresponds to a 5-dimensional linear
section $L$ of $M_{7}$ admitting a single node. 
The projection of $L$ from the node is isomorphic to a proper intersection $G(2,5)\cap Q$, where $Q$
is a quadric in $\mathbb{P}^9$ such that $G(2,5)\cap Q$ contains a 4 dimensional quadric.
Moreover a generic variety $G(2,5)\cap Q'$ containing a 4 dimensional quadric arises in this way.
\end{thm}

\begin{thm}\label{z G(2,6) w S10}
The projective dual variety of $M_8=G(2,6)$ is irreducible. The general element of this
variety defines a hyperplane section $L$ of $G(2,6)$ admitting a single node as singularity. 
The projection of $L$ from the node is then a proper linear section of
$OG(5,10)$, containing a 6 dimensional quadric. Moreover a generic linear section of 
$OG(5,10)$ containing a 6 dimensional quadric arises in this way. 
\end{thm}

\begin{thm}\label{z G2 w LG(3,6)} The projective dual variety to $G_2$ is irreducible.  
The general element of this
variety defines a hyperplane section $L$ of $G_2$ admitting a single node as singularity.  
Let $L$ be any hyperplane section of $G_2$ admitting a single node. Then
the projection of $L$ from the node is a proper linear section of,
$LG(3,6)$, containing a 3 dimensional quadric. Moreover a generic linear section of $LG(3,6)$ containing 
a 6 dimensional quadric arises in this way.
\end{thm}
The proof of Theorems \ref{z S10 w G(2,5)}, \ref{z G(2,6) w S10}, \ref{z LG(3,6)
w G(2,6)} and \ref{z G2 w LG(3,6)}
is based on analyzing representations appearing on the linear sections involved and comparing the equations of the varieties involved.

\section{Descriptions of Mukai varieties, their tangents and projective duals} \label{mukai varieties} In this section we recall the known descriptions of Mukai
varieties and their projective duals. Moreover we provide a classification of maximal dimensional quadrics contained in them. 
As reference for the descriptions contained in this section we suggest \cite{Muk1,MukGrass,RS,SK}.

We start with the general description of the Grassmannian $G(2,n)$.
\subsection{The Grassmannian $G(2,n)$}  Let $V$ be a $n$-dimensional vector space with $n\geq 2$. The Grassmannian $G(2,V)$ 
is then the subvariety of $\mathbb{P}(\bigwedge^2 V)$ consisting of simple forms. It is scheme theoretically the zero locus 
of the quadratic form:
$$sq_V:\wedge^2 V \ni \omega\mapsto \omega\wedge\omega \wedge^4 V.$$
The Grassmannian is also a homogeneous space of $\operatorname{GL}(V)$. In this language if $V$ is the standard representation of $GL(V)$
then $G(2,V)$ is the unique closed orbit of the projectivized representation $\mathbb{P}(\bigwedge^2 V)$.
Let us now fix a point $p$ in $G(2,V)$ i.e. a two-dimensional subspace $V_2 \in V$. The stabilizer subgroup of $p$ is the parabolic subgroup 
$P$ of $GL(V)$ consisting of automorphism preserving $V_2$. By standard Lie theory $P$ has a decomposition into a semi-direct product of a 
semi-simple Lie group and a solvable ideal. Such a semi-simple Lie group is called a Levi subgroup of $P$. It is also known that all Levi subgroups are conjugate.
 In our case a choice of Levi subgroup of $P$ corresponds to a choice of decomposition $V=V_2\oplus V_4$, then the Levi subgroup is the direct product 
$GL(V_2)\times GL(V_4)$.
The representation $\mathbb{P}(\bigwedge^2 V)$ restricted to the Levi subgroup decomposes into 
$$\mathbb{P}(\bigwedge^2 V_2\oplus (V_2\otimes V_4) \oplus \bigwedge^2 V_4).$$
Let us consider the quadratic form $sq_V$ with respect to the above decomposition.
To do this we first observe that $\wedge^4 V$ restricted to our Levi subgroup also decomposes:
$$\wedge^4 V=(\bigwedge^2 V_2 \otimes \bigwedge^2 V_4)\oplus (V_2\otimes \bigwedge^3 V_4) \oplus \det(V_4).$$
Now $$sq_V\colon (\omega_2, \varphi, \omega_4)\mapsto (\varphi\wedge\varphi+\omega_2\otimes \omega_4,\varphi \wedge \omega_4 ,\omega_4\wedge\omega_4).$$ 
It follows that the invariant subspace $\mathbb{P}(\bigwedge^2 V_2 \oplus (V_2\otimes V_4))$ is the tangent subspace of the Grassmannian $G(2,V)$ in the point $p$.

The projective dual variety of the Grassmannian $G(2,V)$ is described as the zero locus of the symmetric $[\frac{n}{2}]$-form
$$pf^n_V\colon \bigwedge^2 V\ni \omega \mapsto \omega\wedge \dots \wedge\omega \in \bigwedge^{2[\frac{n}{2}]} V.$$
It follows that the projective dual variety of $G(2,V)$ is a hypersurface or is of codimension 3. In both cases it is irreducible. 
       
For the study of maximal dimensional quadrics in the Grassmannian $G(2,5)$ and $G(2,6)$ it is well known that these quadrics are of dimension 4 and are described
as the set of 2-spaces contained in a chosen 4-dimensional vector subspace of the vector space defining the Grassmannian.  

\subsection{The Orthogonal Grassmannian $OG(5,10)$} Let us start with some generalities about the variety $OG(n,2n)$. To define this space we 
start with a $2n$-dimensional vector space $V_n$ endowed with a quadratic form $q$. Consider the variety $S$ of 5-subspaces of $V_{10}$ isotropic with 
respect to $q$. It is a subvariety of the Grassmannian $G(n,2n)$ having two components $S^{ev}$ and $S^{odd}$. 
They are called the even and odd orthogonal Grassmannians and are denoted by $OG(n,2n)$.
The orthogonal Grassmannian is a homogeneous variety of the group $SO_q(V)$.
In this paper we are interested in the so-called spinor embeddings of these varieties.
A convenient way to get the description of the image of this embedding is to start with a point $p\in OG(n,2n)$ i.e. a subspace $V_n$
isotropic with respect to $q$. We then have a natural decomposition $V=V_n\oplus V_n^*$ in which $q$ is given by the matrix 
$$\left(\begin{array}{cc}
   0 &I_n\\
I_n & 0
  \end{array}\right).
 $$ 
The parabolic subgroup of $SO_q(V_{2n})$ of elements preserving $V_n$ has as Levi subgroup $GL(V_n)$. It is however more convenient to write 
the spinor embedding as an invariant variety in terms of $SL(V_n)$ representation:
$$\mathbb{P}(\bigwedge^{ev} V_n),$$
the projectivization of the even part of the exterior algebra of the standard representation $V_n$.  
The even orthogonal Grassmannian in its spinor embedding is then described in this space as the closure of the image of the exponential map:
$$ exp: \bigwedge^2 V_n\ni \omega \longrightarrow 1+\sum_{i=1}^n \frac{1}{n!}\omega^{\wedge i} \in \bigwedge^{ev} V_n.$$

In our case $n=5$ and $$\mathbb{P}(\bigwedge^{ev} V_5)=\mathbb{C}\oplus \bigwedge^2 V_5 \oplus \bigwedge^4 V_5.$$
To get a set of equations in an intrinsic way we use the identification of $SL(V_5)$ representations $\mathbb{C}=det V_5$ and $\bigwedge^4 V_5=V_5^*$.
The orthogonal Grassmannian is now scheme theoretically the zero locus of the quadratic form:
$$\det{V_5}\oplus \bigwedge^2 V_5 \oplus V_5^*\ni (x,A,v)\mapsto (x(v)+A\wedge A, A(v)) \in \bigwedge^4 V_5 \oplus V_5.$$
Finally observe that the above form is invariant with respect to the $GL(V_5)$ action on $\det{V_5}\oplus \bigwedge^2 V_5 \oplus V_5^*$, hence $OG(5,10)$ 
is a $GL(V_5)$ invariant subvariety in  $\mathbb{P}(\det{V_5}\oplus \bigwedge^2 V_5 \oplus V_5^*)$. In fact one checks easily that it is the closure of one orbit.

The tangent space to $OG(5,10)$ attached to the point $p=\mathbb{P}(\det{V_5})$ is clearly the space $\mathbb{P}(\det{V_5}\oplus \bigwedge^2 V_5)$.
Moreover it is a well known theorem (see for example \cite{Ein}) that the variety $OG(5,V_{10})$ in its spinor embedding is self dual. More precisely its dual variety is 
$OG(5,V_{10}^*)$ embedded via its spinor embedding in $\mathbb{P}(\det{V_5^*}\oplus \bigwedge^2 V_5^* \oplus V_5)=\mathbb{P}(\det{V_5}\oplus \bigwedge^2 V_5 \oplus V_5^*)^*$.

We recall also the classification of maximal dimensional quadrics in $OG(5,10)$ due to Ranestad. By \cite{RS} the quadrics defining the orthogonal Grassmannian 
$OG(5,10)$ define a rational map $v^+$ from $\mathbb{P}^{15}$ onto a quadric in $\mathbb{P}^9$.
We claim that the maximal dimensional quadrics are the intersections of the $\mathbb{P}^7$ fibers of this map with the orthogonal Grassmannian  $OG(5,10)$. 
Indeed by the interpretation in \cite{RS} the fibers are projective spaces of dimension 7. To see that they intersect the Grassmannian in a quadric it is enough to observe that 
the quadrics defining the Grassmannian $OG(5,10)$ restricted to any chosen fiber of $v^+$ by definition form a one dimensional vector space. They hence define a quadric hypersurface on the $\mathbb{P}^7$.
For the converse let $Q$ be a quadric of dimension $\geq 6$ in $OG(5,10)$. Then $Q$ spans a linear space that is not contained in $OG(5,10)$ and meets it in a quadric hypersurface. 
It follows that this space must be contained in a fiber of $v^+$ hence must be a quadric of dimension 6 described above.

\subsection{The Lagrangian Grassmannian $LG(3,6)$}\label{eq Lagr}
For a chosen vector space $V_{2n}$ of dimension $2n$ and a generic symplectic form $\omega\in \bigwedge^2 V_{2n}$
the variety $LG_{\omega}(n,V_{2n})$ is the subvariety of the Grassmannian $G(n, V_{2n})$ parametrizing $n$-spaces isotropic with respect to the form $\omega$.
In this way $LG_{\omega}(n,V_{2n})$ is a non-proper linear section of the Grassmannian $G(n, V_{2n})$.
The embedding that we consider is the one coming from the Pl\"ucker embedding of the Grassmannian. 
The variety $LG(n,2n)$ is a homogeneous variety of the simple Lie group $Sp_{\omega}(V_{2n})$ of automorphisms of $V_{2n}$ preserving the form $\omega$.

From now on to avoid technicalities we concentrate on the case $n=3.$
As in the previous case to get a suitable description of our variety it is convenient to fix a point $p\in LG(3,V_6)$
 i.e. a subspace $V_3$ isotropic with respect to $\omega$.
Again  we get in this way a natural decomposition $V_{6}=V_3 \oplus V_3^*$ such that omega is given by the matrix:
$$\left(\begin{array}{cc}
   0 &I_3\\
-I_3 & 0
  \end{array}\right).
 $$ 
Then $\bigwedge^3 V_{6}=\det V_3 \oplus (\bigwedge^2 V_3 \otimes V_3^*) \oplus (\bigwedge^2 V_3^* \otimes V_3) \oplus \det V_3^* $.
Now we observe that:
$\bigwedge^2 V_3 \otimes V_3^*=(S^2 V_3 \otimes \det V_3^*) \oplus V_3$
and the span of the Lagrangian Grassmannian is the subspace:
$$\det V_3 \oplus (S^2 V_3 \otimes \det V_3^*) \oplus (S^2 V_3^* \otimes \det V_3) \oplus \det V_3^* .$$

Before we pass to the equations describing $LG(3,V_6)$ let us introduce some notation. 
As usual the evaluation map will be denoted by 
$$\det(V_3)\otimes \det(V_3)^*\ni a\otimes b \mapsto a(b)=b(a)\in \mathbb{C}.$$
as well as any map based on this evaluation as for instance:
$$(S^2 V_3 \otimes \det(V_3)^*)\otimes \det(V_3) \ni B\otimes a  \mapsto B(a)\in S^2 V_3$$
and
$$(S^2 V_3^* \otimes \det(V_3))\otimes \det(V_3)\ni A\otimes b \mapsto A(b)\in S^2 V_3^*.$$
We moreover have the natural projection
$$S^2(S^2 V_3 \otimes \det V_3^*)=(S^4 V_3\otimes (\det V_3^*)^2) \oplus S^2 V_3^* \xrightarrow{\pi}  S^2 V_3^*$$
and on the dual space
$$S^2(S^2 V_3^*\otimes \det V_3)=(S^4 V_3^*\otimes (\det V_3)^2) \oplus S^2 V_3 \xrightarrow{\pi'}  S^2 V_3^*$$
Finally we have two projections from
$$(S^2 V_3 \otimes \det V_3^*) \otimes (S^2 V_3^* \otimes \det V_3)=S^2 V_3 \otimes S^2 V_3^*= \Phi_{2,2,0}\oplus \Phi_{1,1,0} \oplus \mathbb{C}$$
onto 
$\Phi_{1,1,0}$ and  $\mathbb{C}$ which we shall denote by $\eta_1$ and $\eta_2$ respectively. Here we follow the standard notation of \cite[\S 15.5]{FH} for the representation $\Phi_{i,j,k}$.  

With the above notation the Lagrangian Grassmannian $LG(3,V_6)$ is defined as the zero locus of the form
\begin{displaymath}
\begin{split}
 & \det V_3 \oplus (S^2 V^*_3 \otimes \det V_3) \oplus (S^2 V_3 \otimes \det V_3^*) \oplus \det V_3^*\ni(a,A,B,b)\mapsto\\ 
 &(\eta_1(A\otimes B), b(a)- \eta_2(A\otimes B), \pi(A)-B(a),\pi'(B)-A(b))\in \\
&\Phi_{1,1,0}\oplus \mathbb{C} \oplus S^2V_3^* \oplus S^2 V_3 
\end{split}
\end{displaymath}

The attached tangent space to $LG(3,V_6)$ in the point $p$ is the space 
$$\det V_3 \oplus (S^2 V^*_3 \otimes \det V_3).$$ 
Finally the projective dual variety to $LG(3,V_6)$ is an irreducible quartic hypersurface. For a more detailed description of the quartic and the type of 
singularities corresponding to orbits in its stratification we send the reader to \cite{RI}. We shall use the fact that there is a unique orbit giving 
nodal sections, and it is the open orbit of the quartic.   

Finally we recall the description of maximal dimensional quadrics contained in $LG(3,6)$ due to Ranestad.
First observe that any conic on $G(3,6)$ which is not contained in a plane contained in $G(3,6)$
parametrizes planes contained in a quadric of rank 5. Now, on a quadric, through any 3 points there is a conic passing through them.
It follows that every quadric contained in $G(3,6)$ is contained in some $G(1,3,6)$ denoting the subvariety of the Grassmannian $G(3,6)$ consisting of planes passing through a fixed point. 
But the intersection $G(l,3,V_6)\cap LG_{\omega}(3,V_6)$ for any
one-dimensional subspace $l\subset V_6$ is a quadric of dimension 3. It follows that the maximal dimensional quadrics in $LG(3,6)$ are of dimension 3 and obtained as
intersections  $G(l,3,V_6)\cap LG_{\omega}(3,V_6)$.

\subsection{The adjoint $G_2$ variety} \label{eqG2}

Let $V_7$ be a vector space of dimension $7$ understood as a standard representation under the action of the group $GL(V_7)$. 
Then the representation $\bigwedge^4 V_7^*$ admits an open orbit. Choose a 4-form $\omega \in \bigwedge^4 V_7^*$ 
from this open orbit. The variety $G_2$ is the subvariety of the Grassmannian $G(2,V_7)$,
consisting of those two-spaces which are isotropic with respect to $\omega$. To see it as a homogeneous space observe that the stabilizer subgroup of 
$\omega$ in the representation $\bigwedge^4 V_7^*$ is a simple Lie group called $\mathbb{G}_2$. The representation of $\mathbb{G}_2$ on $V_7$ is irreducible and 
called the standard representation of $\mathbb{G}_2$. We shall denote it $V_7^{\mathbb{G}_2}$. Now $\bigwedge^2 V_7^{\mathbb{G}_2}$ decomposes into 
$V_7^{\mathbb{G}_2}\oplus \operatorname{Ad}_{\mathbb{G}_2}$, where $\operatorname{Ad}$ denotes the adjoint representation of the group $\mathbb{G}_2$. In this case the 
space $\operatorname{Ad}_{\mathbb{G}_2} $ is the subspace of $\wedge^2 V_7$ consisting of elements isotropic with respect to $\omega$. The variety $G_2$ is
therefore obtained as the intersection $\mathbb{P}(\operatorname{Ad}_{\mathbb{G}_2})\cap G(2,V_7)$ and thus is the unique closed orbit of 
the projectivized adjoint representation of $\mathbb{G}_2$. In particular $G_2$ is a homogeneous space.    

For an intrinsic way to get the equations let us fix a point $p\in G_2$ i.e. a subspace $V_2$ of dimension 2 isotropic with respect to $\omega$. The stabilizer  
subgroup of $p\in G_2$ contains $SL(2)$ embedded in such a way that $V_7^{G_2}=V_2\oplus V_2^* \oplus (S^2V_2\otimes \det V_2^*)$. Then after restriction we have 
\begin{displaymath}
\begin{split}
\bigwedge^2 V_7^{G_2}=V_2\oplus V_2^* \oplus (S^2V_2\otimes \det V_2^*)\oplus \det V_2 \oplus (S^3 V_2\otimes \det V_2^*) \oplus (S^2V_2\otimes \det V_2^*)\\
\oplus \mathbb{C} \oplus 
(S^3 V_2^*\otimes \det V_2)\oplus \det V_2^*,
\end{split}
\end{displaymath}
and 

$$\operatorname{Ad}_{\mathbb{G}_2}=\det V_2 \oplus (S^3 V_2\otimes \det V_2^*) \oplus (S^2V_2\otimes\det V_2^*)\oplus \mathbb{C} \oplus 
(S^3 V_2^*\otimes \det V_2)\oplus \det V_2^*.$$
In fact $G_2$ and $p$ are invariant under the projectivization of the above action understood as a $GL(2)$ representation.  

By the description above, the variety $G_2$ being the intersection $\mathbb{P}(\operatorname{Ad}_{\mathbb{G}_2})\cap G(2,V_7)$ is described as the zero locus of the form:
$\operatorname{Ad}_{\mathbb{G}_2}\ni A\mapsto A\wedge A \in \wedge^4 V_7^{\mathbb{G}_2}$

The following Lemma provides us a classification of orbits of hyperplanes in the projectivization of the adjoint representation $\operatorname{Ad}(\mathbb{G}_2)$ giving rise to singular sections 
of $G_2$. Recall that in \cite[lemma 1]{M3l3} a classification of all orbits of the co-adjoint representation lying outside the dual variety of 
$G_2$ is given in terms of a family of sextic hypersurfaces.
In the lemma below we complete this classification with known results concerning orbits contained in the dual variety.  
\begin{lemm}\label{orbity}
The discriminant variety of the adjoint representation of the simple Lie group $\mathbb{G}_2$ admits the following orbits:
\begin{itemize}
 \item An open orbit $O_{12}$ of dimension 12 
\item One orbit $O_{11}$ of dimension 11 being an open subset of the base locus of sextic hypersurfaces
\item One orbit $O_{10}$ of dimension 10 being an open subset of the singular locus of the discriminant sextic.
\item One orbit $O_9$ in dimension 9 being an open subset of the intersection of $\overline{O_{11}}\cap \overline{O_{10}}$.
\item One orbit $O_7$ of dimension 7.
\item One orbit $O_5$ of dimension 5 corresponding to the variety $G_2$ in the co-adjoint representation being isomorphic to the adjoint representation by 
the Killing form.   
\end{itemize}
\end{lemm}
\begin{proof} The orbits $O_{11},O_{9},O_{7},O_{5}$ are the projectivizations of the well known 4 nontrivial nilpotent orbits of the semi-simple Lie group $\mathbb{G}_2$.
The orbit $O_{12}$ is the semi-simple orbit corresponding to long root vectors of Cartan sub-algebras. The orbit $O_{10}$ is the unique orbit of mixed type. The proof of uniqueness 
and its description is completely analogous to the description of the mixed orbit with semi-simple part being a short root vector of a Cartan sub-algebra contained in \cite[lemma 1]{M3l3}.   
\end{proof}

\end{section}
\begin{section}{The proofs}
The idea of the proofs is the following. We start with the fact that each of our Mukai varieties 
appears as an orbit of a representation of a suitable Lie Group on a projective space. 
Now on one hand we consider the representation corresponding to $M_{g}$ restricted to a suitable subgroup preserving a singular hyperplane sections
on the other we have the representation corresponding to $M_{g-1}$ restricted to a subgroup preserving a linear space of suitable dimension containing
a quadric.
Finally we identify these restricted representations and the invariant varieties corresponding to considered varieties.

\begin{proof}[Proof of theorem \ref{z S10 w G(2,5)}]
Let us consider a point $p\in OG(5,10)$ the isotropy group of $p$ contains
$GL(5)$ as a Levi subgroup. The corresponding action of $ GL(5)$ onto
$\mathbb{P}^{15}$ decomposes into:
$$\mathbb{P}(\det V \oplus \bigwedge^2 V \oplus V^{*}),$$
where $V$ is the standard representation of $GL(5)$. Let us denote the coordinates corresponding to this decomposition
by $(x,A,v)$. In these coordinates $OG(5,10)$ is described as the zero locus of $x(v)+ A\wedge A=0$ and $A(v)=0$.
As $p$ is given by $A=0$ and $v=0$, the tangent space to $OG(5,10)$ in $p$ is given by $v=0$. A hyperplane containing the tangent space of $OG(5,10)$ in the
fixed point $p$ of the group action is then given by choosing a hyperplane $U$ in $V$. A Levi subgroup of the stabilizer of such a hyperplane is isomorphic to 
$GL(4)$ and corresponds  to a decomposition of $V$ into the $GL(4)$ representation $U\oplus \mathbb{C}$ where by abuse of notation $U$ is now the standard representation 
of $GL(4)$. The  $GL(4)$ representation on the tangent hyperplane is thus $\mathbb{P}(\det U \oplus \bigwedge^2 U \oplus
U \oplus U^*)$, hence on the projection from $p$ we have the representation
$\mathbb{P}(\bigwedge^2 U \oplus U \oplus U^*)$. Denote the coordinates corresponding to this decomposition by 
$(B,u^*,u)$. 
\begin{Claim}
 The equations describing the projection are $B\wedge B=0$, $B(u)=0$ and $u^*(u)=0$.
\end{Claim}
Indeed let us denote the vector corresponding to the fixed part of the $GL(4)$ action on $V$ by $t$ and its corresponding dual by $t^*$. So that 
$A= B+t^*\wedge u^*$, $v^*=u^*+ t^*$, $v=u+t$ and $x=x'\wedge t^*$. The equations of $OG(5,10)$ in these coordinates are:
$$(x'\wedge t^*)(u+t) + (B+t^*\wedge u^*)\wedge(B+t^*\wedge u^*)=0, \quad (B+t^*\wedge u^*)(u+t)=0.$$
Expanding these we get:
$$(-x'(u)+2 B\wedge u^*)\wedge t^* + x'\wedge t^*(t)+B\wedge B=0, \quad B(u)-t^*(t) \wedge u^* + t^*\wedge u^*(u)=0.$$
We now decompose the equations with respect to $t^*$, put $t=0$, and consider all equations not involving $x'$ and obtain the claim.

Consider now on the other hand the following $GL(4)$ representation 
$$\mathbb{P}((\bigwedge^2 (U^*\oplus \mathbb{C})) \oplus U)=\mathbb{P}(\bigwedge^2 U^* \oplus U^* \oplus U).$$ 
Denote the coordinates corresponding to the above decomposition by $(B',w'^*,w')$. It clearly contains as an invariant
subvariety the cone $G$ over the Grassmannian $G(2,U^*\oplus \mathbb{C})$ and the quadric $Q'$ of rank 8 given by $w'^*(w')=0$. The variety $G\cap Q'$ is then 
defined by the equations $B'\wedge B'=0$, $B'\wedge w^*=0$, and $w'^*(w')=0$.

We now clearly see that by choosing an element in $\det U$ giving us an isomorphism $\bigwedge^2 U^*\rightarrow \bigwedge^2 U$ we get the desired isomorphism 
between the projection of any singular hyperplane section of $OG(5,10)$ to the intersection of the cone over $G(2,5)$ with the quadric $Q'$ defined above. We now 
need only to observe that the projection of any nodal codimension 5 section from its node is a generic section of the variety obtained above. It follows that it is 
isomorphic to the intersection of $G(2,V_5)$ with a quadric $Q$ containing a linear space $L_5$ of dimension 5 meeting the Grassmannian $G(2,5)$ in some four-dimensional 
quadric corresponding to some $G(2,V_4)$ for $V_4\subset V_5$ a 4 dimensional vector subspace of $V_5$.

For the converse let $G(2,V_5)\cap Q$ be an intersection containing a four-dimensional quadric $Q_4$. Then $Q_4$ must be equal to $G(2,V_4)\subset G(2,V_5)$ for $V_4\subset V_5$ a 4 dimensional vector subspace of $V_5$,
and then by adding Plucker quadrics defining the Grassmannian we can change $Q$ to $\tilde{Q}$ in such a way that $\tilde{Q}\cap G(2,V_5)=Q\cap G(2,V_5)$ and $\tilde{Q}$ contains 
the whole 5-dimensional space spanned by $Q$. It is now easy to see that such an intersection can be embedded as a linear section of $G\cap Q'$ hence is a projection 
of a singular section of $OG(5,10)$. Taking a generic intersection  $G(2,V_5)\cap Q$ containing a four-dimensional quadric $Q_4$ ensures us that the latter projection will
be performed from a node.

\end{proof}

We proceed similarly with Theorem \ref{z G(2,6) w S10}. The argument is due to L. Manivel.
\begin{proof}
Consider a point $p\in G(2,6)$. A Levi subgroup of its stabilizer is $GL(2)\times
GL(4)$ and the representation involved is 
$\bigwedge^2 V_2 \oplus (V_2\times V_4) \oplus \bigwedge^2 V_4$. Denote the
corresponding coordinates $(p,v,A)$. The tangent space to $G(2,6)$ in the fixed 
point is given by $A=0$. Hence choosing a hyperplane tangent to $G(2,6)$ at $p$ relies on choosing an element
$\omega\in \bigwedge^2 V_4^{*}$. Now the representation of $Sp(2)$ on the
ambient space of the image of the projection is $V_4\oplus V_4\oplus\bigwedge^{<2>} V_4$.

Observe that we recover the same representation on the space spanned by the
following section of $OG(5,10)$.	
As in the previous theorem consider $OG(5,10)$ as invariant under the action of
$GL(5)$ in the representation $\mathbb{P}(\det(V^*_5)\oplus \bigwedge^2
V^*_5\oplus V_5)$, take a decomposition of $V_5=V_4 \oplus \mathbb{C}$. The
corresponding representation of $GL(4)$ is $\mathbb{P}(\det(V^*_4)\oplus
\bigwedge^2 V^*_4 \oplus V^*_4 \oplus V_4 \oplus \mathbb{C})$. If we now fix a
symplectic form $\omega'$ on $V_4$ we get a representation of $Sp(2)$ given by
$V_4\oplus V_4\oplus\bigwedge^{<2>} V_4 \oplus \det V_4\oplus 2 \mathbb{C}$ we clearly
recognize the same representation as above as a codimension 3 component. Denote
its corresponding coordinates by $(w_1,w_2,B)$.

Let us now compare the equations defining the corresponding varieties. For the
projection of $G(2,6)$ from $p$ we have $A\wedge A=0$, $A\wedge v_1=0$, $A\wedge v_2=0$,
$\omega(v_1\wedge v_2)=0$. For the section of $OG(5,10)$ we have 
$B\wedge w_1=0$, $B(w_2)=0$, $B\wedge B=0$, $w_1(w_2)=0$. It is clear that these varieties are isomorphic.

For the converse observe that in the $GL(4)$ representation $\mathbb{P}(\det(V^*_4)\oplus
\bigwedge^2 V^*_4 \oplus V^*_4 \oplus V_4 \oplus \mathbb{C})$ the space $\mathbb{P}(V_4\oplus V_4^*)$ is a $\mathbb{P}^7$ meeting 
$OG(5,10)$ in a quadric of dimension 6 which by homogeneity is any such quadric. We easily check that the general codimension 3 linear section containing $\mathbb{P}(V_4\oplus V_4^*)$ 
is linearly isomorphic to the section by the space $V_4\oplus V_4\oplus\bigwedge^{<2>} V_4$ and conclude as above. 
\end{proof}

A similar procedure gives another proof of theorem \ref{z LG(3,6) w G(2,6)} additionally to the proof given in \cite{RI}.
\begin{proof}
Fix a point $p$ on the Lagrangian Grassmannian $LG(3,6)$. The stabilizer of $p$
contains $GL(3)$ as a Levi subgroup. The corresponding representation is:

$$\mathbb{P}(\det V_3 \oplus (S^2 V_3^{*} \otimes \det V_3 ) \oplus (S^2 V_3 \otimes \det
V_3^{*} ) \oplus \det V_3^{*} ),$$ 
denote the elements of the decomposition by
$(x,A,B,z)$. We know that the tangent space $T_p$ is the subspace given by $B=0, z=0$. Next, we observe that a choice of
hyperplane containing $T_p$ is equivalent to choosing an element $Q$ of
$\mathbb{P}((S^2 V_3\otimes \det V_3^{*}\oplus \det V_3^*)^{*})$. Consider the isotropy subgroup of
$GL(3)$ fixing $Q$. For a generic choice of $Q$ it contains $SL(2)$ in such a way that the corresponding $SL(2)$
representation on $V_3$ is $S^2 V_2$. The $SL(2)$ representation on the ambient space of $LG(3,6)$ is then
 $$\det(S^2 V_2) + S^2(S^2 V_2)\otimes \det(S^2 V_2^*) + S^2(S^2 V_2^*) \otimes \det(S^2 V_2) + \det (S^2 V_2^*).$$
Now using
$$S^2(S^2 V_2)=S^4 V_2 \oplus \mathbb{C}$$
and 
$$\det(S^2 V_2)=\mathbb{C},$$
and $V_2=V_2^*$, we get the representation on the ambient space of $LG(3,6)$ as
$$\mathbb{C}\oplus S^4 V_2\oplus \mathbb{C} \oplus S^4 V_2 \oplus \mathbb{C} \oplus \mathbb{C}.$$
The hyperplane section is given by a linear form on the subspace generated by the two last $\mathbb{C} $.
We hence get the following representation on the hyperplane section:
$$\mathbb{C}\oplus S^4 V_2\oplus \mathbb{C} \oplus S^4 V_2 \oplus \mathbb{C} .$$
Now the representation on the projection of the
hyperplane from $p$ is  
$$S^4 V_2\oplus \mathbb{C} \oplus S^4 V_2 \oplus \mathbb{C}.$$

On the other hand consider a $G(2,4)$ in $G(2,6)$ together with a decomposition
$V_6=V_4\oplus V_2$ and the associated $GL(2)\times GL(4)$ representation:
$\bigwedge^2 V_2 \oplus (V_2\otimes V_4) \oplus \bigwedge^2 V_4$. Next, consider
a general codimension 3 linear space $H_3$ in $V_2\otimes V_4$. We observe that
there is a subgroup of $GL(2)\times GL(4)$ which is isomorphic to $SL(2)$ which
fixes $H_3$. The associated $SL(2)$ representation on $V_6$ is $V_2\oplus S^3 V_2$ 
and hence on $\wedge^2 V_6$ is 
$$\mathbb{C}\oplus V_2\otimes S^3 V_2 \oplus \bigwedge^2 (S^3 V_2).$$
We now observe that 
$$V^*_2\otimes S^3 V_2= S^4 V_2\oplus S^2 V_2$$ 
and 
$$\bigwedge^2 (S^3 V_2)= S^4 V_2 \oplus \mathbb{C}.$$ 
Hence we get 
$$\wedge^2 V_6=\mathbb{C}\oplus S^4 V_2\oplus S^2 V_2 \oplus S^4 V_2 \oplus \mathbb{C}$$
then $H_3$ is given by considering the subspace with the projection onto $S^2 V_2$ equal to zero. 
Then on $H_3$ we have 
$$\mathbb{C}\oplus S^4 V_2 \oplus S^4 V_2 \oplus \mathbb{C}.$$

The comparison of the equations describing our varieties in the coordinates corresponding
to the decompositions above gives us an isomorphism between any nodal hyperplane section of $LG(3,V_6)$ and a generic codimension 3 section of 
$G(2,6)$ containing some $G(2,4)$ i.e. a generic quadric $Q\subset G(2,6)$ of dimension 4. 
Indeed in both cases we have three types of equations:
\begin{itemize}
 \item the first type of equations is described by two maps: 
$$\pi_1 \colon S^2(S^4 V_2)=S^8 V_2\oplus S^4 V_2 \oplus \mathbb{C}\rightarrow S^4 V_2 \oplus \mathbb{C}$$
the canonical projection, and 
$$ m_1\colon (\mathbb{C} \oplus S^4 V_2)\otimes \mathbb{C}\rightarrow S^4 V_2 \oplus \mathbb{C}$$
standard multiplication.
If we now have $(x,A,y,B)\in S^4 V_2\oplus \mathbb{C} \oplus S^4 V_2 \oplus \mathbb{C}$ the equations correspond to 
$\pi_1(A)-m_1(x\otimes (y,B))=0$.
\item the second type of equations correspond to a subset of $S^4 V_2\otimes S^4 V_2\oplus \mathbb{C}$. We saw that $S^4 V_2\oplus \mathbb{C}$ 
may be seen as a subspace of $S^2((S^4 V_2))$. The subset is just the syzygies between the quadrics. More precisely we have a map:
    $$S^4 V_2 \otimes S^2(S^4 V_2)\rightarrow S^3(S^4 V_2)$$
given by multiplication. This maps induces a map $S^4 V_2\otimes S^4 V_2\oplus \mathbb{C} \rightarrow S^3(S^4 V_2)$ which defines our second type of equations.

\item the last type consists of a single equation. It corresponds to a map $S^2(S^4 V_2 \oplus \mathbb{C})\rightarrow \mathbb{C}$ induced by the composition 
$$S^2(S^4 V_2 \oplus \mathbb{C})\rightarrow S^2(S^2 (S^4 V_2))\rightarrow S^4(S^4 V_2)\rightarrow \mathbb{C}.$$
\end{itemize}

\end{proof}
The situation in theorem \ref{z G2 w LG(3,6)} is more complicated.
Indeed as the involved representation of $\mathbb{G}_2$ is the adjoint one and the general singular section appears 
on an orbit of codimension 1 we have a two-dimensional subgroup of $\mathbb{G}_2$ acting on the general singular hyperplane section.
We can check easily that its Lie algebra decompose as semi-direct product of $\mathfrak{sl}_1$ and a nilpotent algebra. Hence we are left with the comparison of representations of $\mathfrak{sl}_1$
which will not help much in finding and understanding the isomorphism.

The theorem \ref{z G2 w LG(3,6)} is however still true and will be proved by guessing the isomorphism for one representative of each orbit of the dual variety 
giving a nodal section of $G_2$. 
We are aware that this does not shed light on the geometry of the construction but we believe that the theorem itself has interesting geometric consequences.

We shall compare nodal hyperplane sections of $G_2$ with codimension 2 sections of $LG(3,6)$ containing a quadric.

Observe that from the description of $G_2$ contained in subsection \ref{eqG2}, we obtain an explicit linear embedding of $G_2$ in $G(2,7)$. Using it we can 
explicitly write down a set of generators of the ideal of $G_2$ in coordinates $(a,\dots, n)$ of $\mathbb{P}^{13}$ by listing Pfaffians of the following matrix:
\begin{displaymath}
M_{G_2}=\left( \begin{array}{ccccccc}
0&-f&e&g&h&i&a\\
f&0&-d&j&k&l&b\\
-e&d&0&m&n&-g-k&c\\
-g&-j&-m&0&c&-b&d\\
-h&-k&-n&-c&0&a&e\\
-i&-l&g+k&b&-a&0&f\\
-a&-b&-c&-d&-e&-f&0
\end{array}\right),
\end{displaymath}
where $h,(m,i,a,e),(c,f,g+k),g,(b,d,n,l),j$ corresponds to the decomposition 
$$\operatorname{Ad}_{\mathbb{G}_2}=\det V_2 \oplus (S^3 V_2\otimes \det V_2^*) \oplus (S^2V_2\otimes \det V_2^*)\oplus \mathbb{C} \oplus 
(S^3 V_2^*\otimes \det V_2)\oplus \det V_2^*.$$

On the other hand if we choose coordinates for $S^2 V_3\otimes \det(V_3^*)$ and $S^2 V_3^*\otimes \det(V_3)$ as well as $\det V_3$ and $\det V_3^*$ we 
can also list the considered equations defining $LG(3,6)$ as follows:
$$A\cdot B=x\cdot y\cdot \operatorname{id},
\wedge^2 A=B\cdot x,
\wedge^2 B=A\cdot y,$$
where $A$ and $B$ are the symmetric matrices given by the chosen coordinates.

\begin{proof} Passing to the proof we check that we have exactly two orbits of nodal hyperplane sections of $G_2$. 
By lemma \ref{orbity} we can choose representing sections as follows 
\begin{itemize}
 \item The section given by $j=c+f$ is a representative of the open orbit of the dual variety. It is singular at $h=1$. 
\item The section given by $f=m$ is also nodal at $h=1$ but corresponds to a hyperplane represented in the dual space by a point which lies in the intersection
of the dual variety with the quadric defined by the Killing form i.e. is an element of the 11-dimensional orbit . 
\end{itemize}

Consider first this special case. Then the image of the
projection is given by all Pfaffians of the
matrix $M_{G_2}$ not involving $i$ and the quadric $a^2+ng-e(c+i)$. We check
directly that these equations define the same
ideal as the linear section of $LG(3,6)$ with the subspace of codimension 2
parametrized via

$$\begin{aligned}
(&u, x_{1,1},x_{1,2},x_{1,3},x_{2,2},x_{2,3},x_{3,3},
y_{1,1},y_{1,2},y_{1,3},y_{2,2},y_{2,3},y_{3,3}, z)=\\
(&n,f, c,-g-k,e,a,c+i,g,-d,-f,l,-b,-d,j)
  \end{aligned}$$

In other terms the equations of the projection correspond to the data
\begin{displaymath}
n, \left(\begin{array}{ccc}
 -f& c&-g-k\\
c&e&a\\
-g-k&a&c+i
\end{array}\right),
\left(\begin{array}{ccc}
 g&-d&-f\\
-d&l&-b\\
-f&-b&-d
\end{array}\right)
,j.
\end{displaymath}
In the example corresponding to the general case we project from $h=1$ and the
projection is given by the Pfaffians of
$M_{G_2}$ not involving $h$ and the quadric $a(a-n-d)+gk-e(i+e+b)=0$. We can
rearrange the equations to be defined by
the following data
$$a-d, \left(\begin{array}{ccc}
i+e+b&d&-g\\
d&-b&c\\
-g&c&-e-m\\

\end{array}\right),
\left(\begin{array}{ccc}
-d&-m&-c-f\\
-m&-a+n+d&g+k\\
-c-f&g+k&d-l
\end{array}\right)
,m+b.$$
being isomorphic to a codimension 2 linear section of the Lagrangian Grassmannian containing the 3 dimensional quadric
defined by $m=b=c=d=f=k=0$ and $g^2+(i+e+b)(e+m)+(a-d)(-a+n+d)$.
 
The equality of the above ideals can be checked by hand, writing each equation of one variety as a linear 
combination of equations defining the other. To save space we wont write down all the equations here, instead 
we provide simple Macaulay scripts performing the computation.
For the $f=m$ section of $G_2$:
\begin{lstlisting}
R=QQ[a,b,c,d,e,f,g,h,i,j,k,l,m,n];
M=matrix{{0,-h,-n,e,a,-c,-k},{h,0,e,a,i,g,-f},{n,-e,0,c,-g-k,m,d},
{-e,-a,-c,0,-f,-d,-b},{-a,-i,g+k,f,0,b,-l},
{c,-g,-m,d,-b,0,-j},{k,f,-d,b,l,j,0}};
G2=pfaffians(4,M);
H=G2+ideal(m-f);
projection=eliminate(h,H);
S=QQ[a,b,c,d,e,f,g,i,j,k,l,n];
ProjectionG2=(map(S,R,{a,b,c,d,e,f,g,0,i,j,k,l,f,n}))(projection);
x=n;
A=matrix{{-f,c,-g-k},{c,e,a},{-g-k,a,c+i}};
B=matrix{{g,-d,-f},{-d,l,-b},{-f,-b,-d}};
z=j;
adjugate= MM -> matrix( for ii from  0 to 2 list 
(for jj from  0 to 2 list 
(-1)^(ii+jj)*det(submatrix'(MM,{ii},{jj})) ));
T=adjugate(A);
U=adjugate(B);
LG=ideal(A*B-x*z,T-x*B,U-z*A);
LG == ProjectionG2
\end{lstlisting}
and for the $j=c+f$ section:
\begin{lstlisting}
R=QQ[a,b,c,d,e,f,g,h,i,j,k,l,m,n];
M=matrix{{0,-h,-n,e,a,-c,-k},{h,0,e,a,i,g,-f},{n,-e,0,c,-g-k,m,d},
{-e,-a,-c,0,-f,-d,-b},{-a,-i,g+k,f,0,b,-l},
{c,-g,-m,d,-b,0,-j},{k,f,-d,b,l,j,0}};
G2=pfaffians(4,M);
H=G2+ideal(j-c-f);
pr=eliminate(h,H);
S=QQ[a,b,c,d,e,f,g,i,k,l,m,n];
ProjectionG2=(map(S,R,{a,b,c,d,e,f,g,0,i,c+f,k,l,m,n}))(pr);
x=a-d;
A=matrix{{i+e+b,d,-g},{d,-b,c},{-g,c,-e-m}};
B=matrix{{-d,-m,-c-f},{-m,-a+n+d,g+k},{-c-f,g+k,d-l}};
z=m+b;
adjugate= MM -> matrix( for ii from  0 to 2 list 
(for jj from  0 to 2 list 
(-1)^(ii+jj)*det(submatrix'(MM,{ii},{jj})) ));
T=adjugate(A);
U=adjugate(B);
LG=ideal(A*B-x*z,T-x*B,U-z*A);
LG == ProjectionG2
\end{lstlisting}
For the other direction we observed that all maximal dimensional quadrics in $LG(3,6)$ are equivalent by the action of the symplectic group. We can also observe that two general codimension 2 sections containing 
a fixed quadric are linearly isomorphic. Indeed, we prove by a straightforward computation in Macaulay 2 on the corresponding Lie algebra representation that the dimension of the stabilizer of the line orthogonal in $\bigwedge^{<3>} V_6$ to 
the codimension 2 space in $\bigwedge^{<3>} V_6$ corresponding to the general orbit is 2. It follows that the orbit of this codimension 2 space is open and dense in the variety of all codimension
2 sections containing a quadric. 
\end{proof}

\end{section}
\begin{section}{Geometric transitions}\label{mir}
The main results of the paper provide a very concrete and geometrically simple
way to connect different families of Calabi-Yau threefolds with Picard 
number one from the list of Borcea as well as the complete list of prime Fano
threefolds of index 1 and genus $\leq 10$.
In the context of Calabi-Yau threefolds such a connection can be formulated as a
composition of two so-called geometric transitions.

In the theory of Landau Ginzburg models for Fano threefolds these are the
simplest possible way to connect two different Fano threefolds such that one can
hope to keep track of the Landau Ginzburg Models involved.
For the sake of future applications in this context we present in this section
further results concerning the geometry of the construction involved.

Let $F_g$ be a general Fano threefold of genus $g$ in its anti-canonical
embedding. By the theorem of Mukai $F_g$ is a transverse linear section of
$M_g$. It is hence clear that $F_g$ admits a flat deformation to a 
nodal Fano threefold $F'_g$ being a transverse linear section of a nodal linear
section $L$ of $M_g$ studied in this paper. 

\begin{lemm} \label{bir} The projections considered in this paper are birational
maps.
\end{lemm}
\begin{proof}
From the fact that Mukai varieties are generated by quadrics and are not cones we know that their projection 
from a point lying on them is a birational morphism contracting the tangent cone to its base. To conclude we 
need only to observe that the considered projections are restrictions of these projections to nodal hyperplane sections which are not cones.
\end{proof}

By our main results and Lemma \ref{bir} the projection of $F'_g$ from its node
is a possibly singular Fano threefold 
$\hat{F}_{g-1}$ containing a quadric surface. Using the Mukai theorem again we
get a flat deformation of $\hat{F}_{g-1}$ to a general Fano threefold of genus
$g-1$. To complete the picture we need only to understand the singularities of
$\hat{F}_{g-1}$.

\begin{prop}
If $F_g$ is a three-dimensional nodal proper linear section of $M_g$ then the
only singularities of its projection $\hat{F}_{g-1}$ are again nodes. Moreover
the number of nodes on $\hat{F}_{g-1}$ is $5,4,4,3$ for $g=7,8,9,10$
respectively.  
\end{prop}
\begin{proof}
Observe that the by Lemma \ref{bir} the singularities of $\hat{F}_{g-1}$ are exactly the images of the lines contracted by the projection.
Moreover all of them lie on the smooth quadric being the exceptional locus of the projection. We see that the singularities are isolated and their number  
is the number of lines contracted by each projection. To compute it observe that each line contracted is contained in 
the intersection of $F_g\cap T_pM_g$ where $p$ is the center of projection. We now observe that $M_g \cap T_pM_g$ is one of the following:
\begin{itemize}
 \item a cone over a Grassmannian $G(2,5)$ for $g=7$
 \item a cone over a product $\mathbb{P}^1\times \mathbb{P}^3$ for $g=8$
 \item a cone over a Veronese surface for $g=9$
 \item a cone over a twisted cubic for $g=10$.
\end{itemize}
Now we observe that it is always a variety of codimension 3 in $T_p M_g$, hence $F_g\cap T_pM_g$ is a union of as many lines as the degree of corresponding cone
i.e. $5,4,4,3$ for $g=7,8,9,10$
respectively.
 
Finally we claim that all singularities are ordinary double points. Since we have only a few cases to consider one can check every 
singularity of a representative of each orbit of varieties and check their type of singularities on the computer. We shall however use a more general argument.
We just observe that the variety $\hat{F}_{g-1}$ is a general proper linear section of $M_{g-1}$ containing a chosen quadric surface. As the quadric surface is 
a scheme theoretical proper linear section of $M_{g-1}$ one can use the following Proposition which is a reformulation of \cite[theorem 2.1]{GPrim2} in a slightly 
more general context.
\begin{prop}
Let $X$ be a smooth projective variety of dimension $s+2$. Let $S\subset X$ be a smooth codimension $s$ surface being a scheme theoretical base locus 
of a linear subsystem $\mathcal{L}\subset |\mathcal{O}_X(d)|$, for some $d\in \mathbb{N}_{\geq 1}$. Then the intersection of a set of $s-2$ generic divisors 
from the system $\mathcal{L}$ is a threefold with only ordinary double points as singularities.  
\end{prop}
\begin{proof}
 The proof of \cite[theorem 2.1]{GPrim2} can be reproduced without changes. 
\end{proof}
\end{proof}

\end{section}
\begin{section}{Applications to moduli}\label{modBNL}
In this section we explain how the results of the paper may be used to describe
some moduli spaces of bundles on curves,
K3 surfaces or Fano 3-folds. We follow the idea presented in \cite{RI}. More
precisely, let $X$ be a generic curve of genus
$g\leq 9$ (or K3 surface, Fano 3 fold of genus $g\leq 10$). Then $X$ is a linear
section of the variety $M_g$ and each nodal
linear space $H$ containing $X$ with node outside $X$ induces an embedding
$\pi_H: X\rightarrow M_{g-1}$.
\begin{prop}\label{different embeddings}
Two different linear spaces $H$ of maximal dimension define different
embeddings.
\end{prop}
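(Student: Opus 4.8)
The plan is to prove injectivity of the assignment $H\mapsto\pi_H$ by reconstructing $H$ from the embedding it induces. Write $\Lambda=\langle X\rangle$, so that $X=M_g\cap\Lambda$, and let $p_H$ be the node of $M_g\cap H$. Since the node lies off $X$ and $p_H\in M_g$, it cannot lie on $\Lambda$ (otherwise $p_H\in M_g\cap\Lambda=X$), so $p_H\notin\Lambda$. Consequently the projection $\pi_{p_H}$ restricts to an isomorphism of $\Lambda$ onto $\Lambda'_H:=\langle\Lambda,p_H\rangle/\langle p_H\rangle$, and $\pi_H$ is just the composite $X\xrightarrow{\ \sim\ }\pi_{p_H}(X)\subset\Lambda'_H\subset M_{g-1}$. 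Thus the two pieces of data attached to $\pi_H$ are its image together with its linear span $\langle\pi_H(X)\rangle=\Lambda'_H$, and the centre of projection $p_H$; the whole argument reduces to recovering $p_H$ from $\pi_H$.

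First I would show that for $H$ of maximal dimension the node alone determines $H$. Since $M_g\cap H$ is singular at $p_H$ we have $T_{p_H}M_g\subset H$, so $H\supseteq\langle\Lambda,T_{p_H}M_g\rangle$; a dimension count (using that $H$ has the maximal admissible dimension and that $X$ is general, so that $\Lambda$ and $T_{p_H}M_g$ are in general position) forces equality $H=\langle\Lambda,T_{p_H}M_g\rangle$. As $\Lambda$ is fixed, this makes $p_H\mapsto H$ a well-defined map and $H\mapsto p_H$ injective on the maximal nodal $H$. It then remains only to see that $\pi_H$ determines $p_H$, and for this I would invoke the converse (unprojection) halves of Theorems \ref{z S10 w G(2,5)}, \ref{z G(2,6) w S10}, \ref{z LG(3,6) w G(2,6)} and \ref{z G2 w LG(3,6)}: the image $\pi_H(M_g\cap H)$ is the component through $\pi_H(X)$ of $M_{g-1}\cap H'\cap Q$, and the Kustin--Miller type unprojection of this locus along the distinguished quadric $Q$ returns $M_g\cap H$ together with its node. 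In other words $p_H$ is singled out as the centre whose unprojection sends $\pi_H(X)\subset M_{g-1}$ back to the marked curve $X\subset M_g$.

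The step I expect to be the main obstacle is precisely this last recovery of $p_H$ from the embedded curve $\pi_H(X)$. Because $X\subset\Lambda$ and $p_H\notin\Lambda$, the map $\pi_H$ is an abstract isomorphism onto its image, so the node is invisible from the bare map $X\to\pi_H(X)$ and must instead be read off from how $\pi_H(X)$ sits inside $M_{g-1}$, equivalently from the tangent-cone quadric $Q$. Since a general canonical curve lies on a large linear system of quadrics, it is not formal that $Q$ (hence $p_H$, hence $H$) is canonically attached to $\pi_H(X)$. I would dispose of this using maximality of $H$ together with generality of $X$: maximality ensures that $M_g\cap H$ is the full nodal section to which the theorems apply, so that $Q$ is intrinsic to the configuration $\pi_H(X)\subset M_{g-1}$, while generality of $X$ guarantees that the span $\Lambda'_H$ and the incidence of $\pi_H(X)$ with $M_{g-1}$ move nontrivially as $p_H$ varies. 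With this in hand, distinct maximal $H_1\neq H_2$ yield distinct nodes $p_{H_1}\neq p_{H_2}$, hence distinct projection centres and distinct spans $\Lambda'_{H_1}\neq\Lambda'_{H_2}$, so the induced embeddings $\pi_{H_1}$ and $\pi_{H_2}$ are different, as claimed.
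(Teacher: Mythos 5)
Your reduction of the statement to ``recover the node $p_H$ from the embedding $\pi_H$'' is a reasonable skeleton, and the first half (recovering $H$ from $p_H$ via $H=\langle\Lambda,T_{p_H}M_g\rangle$, using maximality of $\dim H$ and a general-position count inside $H$) is plausible. But the second half, which you yourself flag as ``the main obstacle,'' is never actually proved: the claim that the tangent-cone quadric $Q$ --- equivalently the node --- is ``intrinsic to the configuration $\pi_H(X)\subset M_{g-1}$'' is exactly what needs an argument, and ``maximality ensures\dots so that $Q$ is intrinsic'' together with ``generality of $X$ guarantees that the span and the incidence move nontrivially as $p_H$ varies'' are assertions, not proofs. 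Note also that the converse (unprojection) statement you invoke is available in the paper only for the $g=9$ case (Theorem \ref{z LG(3,6) w G(2,6)}), and even there it reconstructs $M_g\cap H$ from the full image $\pi_H(M_g\cap H)$ together with its distinguished quadric, not from the curve $\pi_H(X)$ alone; a canonical curve lies on many quadrics, so nothing singles out $Q$ from the data of $\pi_H(X)$.

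There is a second, more structural problem: what ``different embeddings'' must mean. The projection target is identified with a linear section of $M_{g-1}$ only up to the group acting on $M_{g-1}$, and for the corollaries (injectivity into Brill--Noether loci) one must show the embeddings are not related by such an automorphism; indeed the paper's proof opens with ``assume $H_1$ and $H_2$ induce \emph{isomorphic} embeddings.'' Under this reading your final inference --- distinct nodes give distinct centres and distinct spans, hence different embeddings --- fails, since an automorphism of $M_{g-1}$ can carry one span and image onto the other. The paper closes precisely this gap by a mechanism your proposal lacks: from isomorphic embeddings it deduces that the spans of the images cut $M_{g-1}$ in projectively isomorphic varieties, lifts this through the projections to projectively isomorphic nodal sections $M_g\cap\langle\Lambda,p_i\rangle$, then invokes Mukai's result \cite{Muk1} that linear sections of $M_g$ are projectively isomorphic only if they lie in the same orbit of the appropriate group action, and finally uses genericity of $X$ (no nontrivial automorphisms) to force that group element to be the identity, whence the two sections, nodes, and spaces $H_i$ coincide. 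Without Mukai's orbit theorem, or some substitute for it, the case of projectively equivalent but distinct configurations cannot be excluded, so the proposal as it stands does not prove the proposition.
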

\begin{proof} The proof contained in \cite{RI} is valid in all cases.
Indeed, assume we have $H_1$ and $H_2$ inducing isomorphic embeddings. Then the
intersections of the linear spans of their
images with $M_{g-1}$ are projectively isomorphic. Hence their preimages by the
projections are also projectively isomorphic,
but these are just the sections of $M_g$ by linear spans of $X$ and the
projecting nodes. By Mukai's result (see \cite{Muk1})
these are isomorphic only if they lie in the same orbit of the appropriate group
action. This would induce an automorphism of
$X$ which by genericity would have to be trivial and in consequence the
automorphism of $M_g$ inducing it would also need to
be trivial.
\end{proof}
Let us now make a side remark.
\begin{rem}\label{uwaga}
The results of this paper may be used to describe the space of polarized K3
surfaces of genus $g-1$ containing a chosen
general curve o genus $g\leq 9$ in its canonical embedding. One might observe
that such a general K3 surface $S$ has Picard
number 2 generated by the hyperplane $H$ and a conic $Q$. The curve of genus $g$
lies then in the system $|H+Q|$, which
induces a morphism from $S$ to $M_g$ contracting $Q$ to a node. Its image is a
nodal linear section of $M_g$ hence
is a linear section of a nodal hyperplane section. It follows that it appears in
our construction. Concluding, the space
of K3 surfaces genus $g-1$ containing a chosen general curve o genus $g\leq 9$
in its canonical embedding is isomorphic to
a linear section of the dual variety to $M_g$ by the dual linear space to the
span of the chosen curve. The same is valid
for Fano threefolds containing a K3 surface of genus smaller by one.
\end{rem}

Let us now consider the situation for $g=7$.
\begin{lemm}\label{stable g=7}
Let $X$ be a general curve of genus $g=7$ embedded as a linear section of
$OG(5,10)$. Consider a nodal linear space $H$
of dimension 7 containing $X$ and denote by $p$ its node. Let $\pi: X\rightarrow
G(2,5)\cap Q$ be the projection from $p$
and let $E$ be the pullback of the universal quotient bundle on $G(2,5)$. Then
$E$ is stable on $X$.
\end{lemm}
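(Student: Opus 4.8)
The plan is to prove stability of the rank-2 bundle $E$ by bounding the degree of any sub-line-bundle from above. First I would compute the relevant numerical invariants: since $X$ is a general curve of genus $7$ and $E$ is the pullback of the universal quotient bundle $\mathcal{Q}$ on $G(2,5)$ under the map $\pi\colon X\to G(2,5)\cap Q$, its determinant is the pullback of $\mathcal{O}_{G(2,5)}(1)$, i.e.\ the Pl\"ucker polarization restricted to $X$. By Theorem~\ref{z S10 w G(2,5)} this projection identifies $X$ (up to the projection from the node $p$) with a linear section of $G(2,5)\cap Q$, and the original polarization on $X$ as a section of $SO(5,10)$ is the canonical class $K_X$. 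Thus I expect $\det E=K_X$, so $\deg E=2g-2=12$ and the slope is $\mu(E)=6$. Stability then amounts to showing every line subbundle $L\hookrightarrow E$ has $\deg L\le 5$.

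**Reformulating via sections and the geometric meaning of $E$.**
The key is to interpret sections of $E$ and its sub-line-bundles geometrically. Because $E=\pi^*\mathcal{Q}$ with $\mathcal{Q}$ the universal quotient, the fibre of $E$ at $x\in X$ is the quotient $V/W_{\pi(x)}$, where $W_{\pi(x)}\subset V$ is the $2$-plane in $\mathbb{C}^5$ parametrized by $\pi(x)\in G(2,5)$; dually, $E$ comes with a canonical map $V^*\otimes\mathcal{O}_X\twoheadrightarrow E$, i.e.\ a $5$-dimensional space of sections $V^*\subseteq H^0(X,E)$. A destabilizing line subbundle $L\subset E$ with $\deg L\ge 6$ would, by the see-saw argument, force $E$ to be an extension of $L'=\det E\otimes L^{-1}$ (of degree $\le 6$) by $L$; I would then derive a contradiction with genericity. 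Concretely, a sub-line-bundle of high degree corresponds to a subsheaf, and I would translate its existence into the statement that the image $\pi(X)\subset G(2,5)$ is special with respect to the tautological sub/quotient filtration --- for instance, that the lines or planes $W_{\pi(x)}$ all contain a common point, or that $\pi(X)$ lies in a sub-Grassmannian. This is exactly the kind of degeneracy a \emph{general} curve avoids.

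**The genericity input, which is the crux.**
The hard part will be converting "general curve of genus $7$'' into the precise non-degeneracy needed to exclude a sub-line-bundle of degree $6$. By Mukai's theorem a general genus-$7$ curve is a transverse linear section of $SO(5,10)$, and its automorphism group is trivial; I would use Remark~\ref{uwaga} and Proposition~\ref{different embeddings} together with the irreducibility of the family of such sections to argue that stability, being an open condition in families, need only be checked on the generic member --- or, alternatively, that a destabilizing subbundle would persist over the whole family and hence impose a closed orbit condition on the pair $(X,H)$ that contradicts the openness of the orbit of nodal sections established in the proof of Theorem~\ref{z S10 w G(2,5)}. I would make this rigorous by exhibiting the Brill--Noether locus: a line subbundle of degree $6$ in a rank-$2$ bundle of canonical determinant yields, via $H^0(E\otimes L^{-1})$, a pencil or special linear series on $X$ whose existence violates the Brill--Noether genericity of $X$. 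The main obstacle is pinning down exactly which special linear series arises and verifying it falls outside the Brill--Noether range $\rho<0$ for $g=7$; everything else (the slope computation, the see-saw reduction, the openness of stability) is routine once that numerical/geometric incompatibility is identified.
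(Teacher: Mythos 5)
Your numerical setup is correct ($\det E=K_X$, $\deg E=12$, $\mu(E)=6$, the $5$-dimensional generating space of sections), and your overall plan --- convert a destabilizing line subbundle into a special linear series that a general genus-$7$ curve cannot carry --- is the same strategy as the paper's. But the proposal stops exactly where the proof must happen. The paper's entire argument is a short section count that never appears in your sketch: if $L\subset E$ is a line subbundle of degree $d$, the quotient is $M=K_X\otimes L^{*}$, and the sequence $0\to L\to E\to M\to 0$ gives $h^0(L)+h^0(M)\ge h^0(E)=5$, while Riemann--Roch gives $h^0(L)-h^0(M)=d-6$; adding these, $2h^0(L)\ge d-1$, so $d\ge 4$ already forces $h^0(L)\ge 2$ (and $d\ge 6$ forces $h^0(L)\ge 3$), which the paper then plays off against the hypothesis that a general genus-$7$ curve has no $g^1_4$. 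The mechanism you propose instead, a pencil ``via $H^0(E\otimes L^{-1})$,'' does not do this: $H^0(E\otimes L^{-1})\neq 0$ merely restates that $L$ maps into $E$ and produces no sections of $L$ itself. Without the count above there is no concrete linear series to test against genericity, so the crux you explicitly defer (``pinning down exactly which special linear series arises'') is a genuine gap, not a routine verification.

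Your fallback arguments would not close the gap either. ``Stability is open in families, so it suffices to check the generic member'' is vacuous here: the lemma \emph{is} a statement about the generic member, and openness helps only after one stable example is exhibited, which is the whole problem; likewise, instability along a subfamily creates no conflict with the openness of the orbit of nodal sections. Finally, a caution that confirms the difficulty you sensed: finishing by pure $\rho<0$ numerology cannot work uniformly in the degree, since for instance $L$ of degree $7$ with $h^0(L)=3$ and quotient a $g^1_5$ is Brill--Noether-consistent on \emph{every} genus-$7$ curve ($\rho=1$ for both series, and $K_X$ minus a $g^1_5$ is such a $g^2_7$). Any correct completion must therefore exploit the stronger structure the paper uses --- the bound $h^0(E)=5$ fed through the exact sequence, combined with the gonality hypothesis (no $g^1_4$) --- rather than Brill--Noether non-existence for the destabilizing bundle alone.
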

\begin{proof} By assumption $X$ has no $g_4^1$. Observe first that by
construction $E$ is a rank 2 bundle on $X$ such that $c_1(E)=K_X$, $h^0(E)=5$.
By contradiction assume that there is a line bundle $L$ on $X$ of degree $d\geq
4$ which is a sub-bundle of $E$.
We get the exact sequence:
$$ 0\longrightarrow L\longrightarrow E\longrightarrow M \longrightarrow 0,$$
where $M=K_X\otimes L^{*}$. It follows that $h^0(L)+h^0(M)\geq h^0(E)=5$. On the
other hand by the Riemann-Roch formula we have $h^0(L)-h^0(M)= d-6\geq-2$. Hence
$h^0(L)\geq 2$, which contradicts the assumption on $C$.
\end{proof}
The following well known corollaries (see \cite{MukBN},\cite{Mukdual})follow.
\begin{cor} Let $C$ be a general curve of genus 7. Then, the Brill Noether locus
$M_C(2,K_X,5)$
parametrization rank 2 vector bundles with canonical determinant and with 5
global
sections is a Fano
variety of genus 7 obtained as the dual linear section of $OG(5,10)$.
\end{cor}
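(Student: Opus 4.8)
The plan is to exhibit $M_C(2,K_C,5)$ as the image of a bijective correspondence with the family of nodal linear spaces $H$ of dimension $7$ containing $C$ (with node off $C$), and then to recognise that family as the dual linear section of $SO(5,10)=S_{10}$. Lemma \ref{stable g=7} already supplies the forward direction: to each such $H$ it attaches, via the projection $\pi_H\colon C\to G(2,5)\cap Q$ of Theorem \ref{z S10 w G(2,5)} and pullback of the (rank two) universal quotient bundle, a \emph{stable} rank two bundle $E_H$ with $c_1(E_H)=K_C$ and $h^0(E_H)=5$, i.e. a point of $M_C(2,K_C,5)$. This defines a morphism $\Phi$ from the parameter space of admissible $H$ to the Brill--Noether locus, and the remaining task is to show $\Phi$ is an isomorphism with the stated target.

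I would next prove that $\Phi$ is a bijection. Injectivity is immediate from Proposition \ref{different embeddings}: distinct $H$ induce distinct embeddings of $C$ into $M_6$, and since $E_H$ is recovered as the pullback of the tautological bundle, distinct embeddings yield non-isomorphic bundles. For surjectivity I would reverse the construction: given $E\in M_C(2,K_C,5)$, stability together with the absence of a $g^1_4$ on the general $C$ (as used in Lemma \ref{stable g=7}) shows that $E$ is globally generated, so its five sections define a morphism $C\to G(2,5)$; using the equations recalled in Section 2 and the quadric produced in the proof of Theorem \ref{z S10 w G(2,5)}, the image lies in a section $G(2,5)\cap Q$, and the converse (unprojection) direction of that theorem then exhibits $E$ as $E_H$ for a unique nodal $H$. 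Upgrading this set-theoretic bijection to an isomorphism of varieties is precisely the content of Mukai's results \cite{MukBN,Mukdual}, which I would invoke here.

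Finally I would identify the parameter space of admissible $H$ with the dual linear section. Each such $H$ is spanned by $\Lambda:=\langle C\rangle=\mathbb{P}^6$ and its node $p$, and the requirement that $S_{10}\cap H$ be nodal at $p$ is exactly the condition defining, by projective duality, a point of $S_{10}^\vee\cap\Lambda^\perp$. Since the spinor tenfold $S_{10}\subset\mathbb{P}^{15}$ is projectively self-dual, $S_{10}^\vee$ is again a spinor tenfold, while $\Lambda^\perp=\mathbb{P}^8$; hence $S_{10}^\vee\cap\Lambda^\perp$ has dimension $10+8-15=3$ and, being a codimension seven linear section of a spinor tenfold, is precisely a Fano threefold of genus $7$ in the sense of Mukai's table. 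Combining the three steps gives $M_C(2,K_C,5)\cong S_{10}^\vee\cap\Lambda^\perp$, as claimed.

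The main obstacle is the surjectivity of $\Phi$ together with its promotion to an isomorphism of moduli: one must check that the morphism attached to an arbitrary $E$ really produces a section of $S_{10}$ that is \emph{nodal}, with node off $C$, so that the inverse of the projection of Theorem \ref{z S10 w G(2,5)} applies, and that the resulting bijection is compatible with the scheme structures. This is where the genericity of $C$ and Mukai's duality theorem do the essential work; the concluding dimension count and the self-duality of $S_{10}$ are then formal.
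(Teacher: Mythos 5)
Your skeleton coincides with the paper's proof, which consists of exactly three ingredients plus a dimension count: Lemma \ref{stable g=7} gives the forward map (each admissible nodal $H$ yields a stable bundle with $c_1=K_C$, $h^0=5$), Proposition \ref{different embeddings} gives injectivity, and the self-duality of the orthogonal Grassmannian identifies the parameter space of admissible $H$'s with a three-dimensional dual linear section of $S_{10}$, i.e.\ a genus-$7$ Fano threefold. Your first, second and fourth paragraphs reproduce this faithfully, including the count $10+8-15=3$.

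The gap is in your surjectivity step. You invoke ``the converse (unprojection) direction'' of Theorem \ref{z S10 w G(2,5)}, but that theorem has no converse: among the statements in the paper only Theorem \ref{z LG(3,6) w G(2,6)} (the $g=9$ case of Iliev--Ranestad \cite{RI}) asserts that suitable linear sections of the target all arise by projection, and nothing in the paper reconstructs a nodal $H\subset\mathbb{P}^{15}$ from an arbitrary stable $E\in M_C(2,K_X,5)$. Your fallback --- letting ``Mukai's duality theorem do the essential work'' --- is circular here: the corollary \emph{is} Mukai's duality statement in genus $7$ (which is precisely why the paper labels it well known, citing \cite{MukBN,Mukdual}), and the purpose of the corollary is to rederive that statement from the projection construction. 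The paper avoids both problems by never constructing an inverse: it closes with a dimension count. The admissible $H$'s form the dual linear section, a \emph{complete} irreducible threefold mapping injectively into $M_C(2,K_X,5)$; since this Brill--Noether locus is a (symmetric) degeneracy locus, each of its components has dimension at least the expected dimension $3g-3-15=3$, so the closed $3$-dimensional image is forced to exhaust the locus. If you replace your third paragraph by this counting argument --- completeness of the dual section plus the lower bound on the dimension of Brill--Noether components, rather than an explicit inverse --- your proof becomes the paper's.
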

\begin{proof}
By dimension count it is a direct consequence of Proposition \ref{different
embeddings},
Lemma \ref{stable g=7} and the fact that the orthogonal Grassmannian is
self-dual.
\end{proof}
We also recover in the same way the result concerning K3 surfaces.
\begin{cor} Let $(S,L)$ be a BN-general polarized K3 surface of genus 7. Then,
the K3 surface $M=M_C(2,K_X,3)$
parametrization rank 2 vector bundles with canonical determinant and second Chern
class of degree 3 is isomorphic
to the dual linear section of $OG(5,10)$.
\end{cor}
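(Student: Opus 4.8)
The plan is to carry over the construction and dimension-count machinery used for the genus-7 curve case (Corollary 1 above) to the K3 setting, invoking the BN-generality hypothesis in place of the "no $g^1_4$" condition. First I would recall that by Mukai's description a BN-general polarized K3 surface $(S,L)$ of genus $7$ arises as a linear section of $SO(5,10)$; concretely, $S = SO(5,10) \cap \Lambda$ for an appropriate linear space $\Lambda$ of the correct dimension. The next step is to apply the projection construction of Theorem \ref{z S10 w G(2,5)}: choosing a nodal linear space $H$ of the appropriate dimension containing $S$ with node $p$ lying outside $S$, the projection $\pi$ from $p$ embeds $S$ into $G(2,5)\cap Q$, and pulling back the universal quotient bundle on $G(2,5)$ produces a rank-$2$ bundle $E$ on $S$ with $c_1(E) = L$ and the expected number of sections.

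The key intermediate step is to establish stability of $E$, which is the K3 analogue of Lemma \ref{stable g=7}. Here I would run the same Riemann–Roch argument: if $E$ were destabilized by a line bundle, the exact sequence $0 \to L' \to E \to M \to 0$ together with the Riemann–Roch formula on the K3 surface and the section count would force the existence of a special linear system on $S$ contradicting BN-generality. One must check that $c_2(E)$ equals $3$, so that $E$ indeed defines a point of the moduli space $M_C(2,K_X,3)$ of rank-$2$ bundles with the stated invariants; this is a direct Chern class computation for the pullback of the quotient bundle under $\pi$. Then, exactly as in the proof of the preceding corollary, Proposition \ref{different embeddings} guarantees that distinct maximal nodal spaces $H$ give distinct embeddings, hence distinct bundles, so the assignment $H \mapsto E$ is injective.

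Finally, I would assemble these pieces by a dimension count. By Remark \ref{uwaga}, the locus of nodal linear spaces containing $S$ is parameterized by a linear section of the dual variety to $SO(5,10)$; since $SO(5,10)$ is self-dual (the same fact used in Corollary 1), this dual section is itself isomorphic to a linear section of $SO(5,10)$, which by Mukai's theorem is a K3 surface of genus $7$. Matching dimensions shows that the injective map $H \mapsto E$ identifies this dual linear section with the moduli space $M = M_C(2,K_X,3)$, giving the isomorphism.

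The main obstacle I anticipate is the stability verification on the K3 surface: unlike the curve case, where subbundles are line bundles and Riemann–Roch is immediate, on a surface one must rule out destabilizing subsheaves and be careful that the numerical argument genuinely uses BN-generality of $(S,L)$ rather than mere smoothness. Controlling which special subsystems are excluded — and confirming that $h^0(E)$ and $c_2(E)$ take exactly the values needed for the point to lie in $M_C(2,K_X,3)$ — is where the real content lies; the remaining identification is then formal, following the template of the genus-$7$ curve corollary verbatim.
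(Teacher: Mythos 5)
Your proposal follows the paper's route: the paper in fact offers no separate argument for this corollary at all, saying only that the K3 statement is recovered ``in the same way'' as the preceding curve corollary, whose proof is a dimension count combining Proposition \ref{different embeddings}, the stability lemma (Lemma \ref{stable g=7}), and the self-duality of the orthogonal Grassmannian --- exactly the ingredients you assemble, together with Remark \ref{uwaga} for the identification of the parameter space with the dual linear section.

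The one substantive point of divergence is how you propose to handle stability, and there your sketch has a gap. Rather than redoing Riemann--Roch on the surface, the reduction that makes ``in the same way'' literally true is restriction to a general curve $C\in |L|$: such a $C$ is Brill--Noether general (Lazarsfeld), hence has no $g^1_4$, adjunction gives $c_1(E|_C)=L|_C=K_C$, and a destabilizing (saturated, hence invertible) rank-one subsheaf $F\subset E$ with $F\cdot L\geq \frac{1}{2}L^2=6$ restricts to a line subbundle of $E|_C$ of degree $\geq 6$, so Lemma \ref{stable g=7} applies verbatim and slope stability of $E$ follows. Your direct surface-level argument, as sketched, does not close on its own: Mukai's BN-generality for a polarized K3 is the bound $h^0(L_1)h^0(L_2)<h^0(L)=7$ for nontrivial decompositions $L=L_1\otimes L_2$, and the section count from a destabilizing sequence only forces $h^0(L_1)+h^0(L_2)\geq 5$, which leaves the case $(h^0(L_1),h^0(L_2))=(2,3)$ (product $6<7$) unexcluded; killing it requires extra input (the restriction trick above, or an analysis of the zero locus of the quotient), so the restriction route is both simpler and actually complete. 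The rest of your plan --- the Chern class check, injectivity via Proposition \ref{different embeddings}, and the final dimension count against the self-dual $SO(5,10)$ --- matches the paper's intended proof.
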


Let us pass to the case $g=10$
\begin{lemm}\label{stable g=10}
Let $X$ be a curve of genus $g=10$ embedded as a general linear section of
$G_2$. Consider a nodal linear
space $H$ of dimension 10 containing $X$ and denote by $p$ its node. Let $\pi:
X\rightarrow LG(3,V)$, where $V$ is a
vector space of dimension 6, be the projection from $p$ and let $E$ be the
pullback of the universal quotient bundle on
$G(3,V)$. Then $E$ is stable on $X$.
\end{lemm}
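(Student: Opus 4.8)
The plan is to mirror closely the argument used for Lemma \ref{stable g=7}, since the structural setup is identical: we have a curve $X$ arising as a linear section of a Mukai variety $M_g$, and after projecting from a node we pull back a universal quotient bundle from a Grassmannian to obtain a rank $3$ bundle $E$ on $X$. First I would record the numerical invariants of $E$. Since $E$ is the pullback of the universal quotient bundle on $G(3,V)$ via a map whose image lies in $LG(3,V)$, it is a rank $3$ bundle, and the hyperplane class on $G(3,V)$ pulls back to the canonical class, so $c_1(E)=K_X$. By Mukai's theory (the genus $10$ entry of the table in the introduction) the composite $X\to LG(3,V)\hookrightarrow G(3,V)$ is given by the complete linear system embedding $X$, so the space of sections of $E$ should be computed to be $h^0(E)=6$, matching the dimension of $V$. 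I would also note the genericity hypothesis on $X$ in Brill--Noether terms: a general curve of genus $10$ carries no low-degree pencils or nets that would produce destabilizing subbundles.

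Next I would argue stability by contradiction. Suppose $E$ is not stable, so there is a subsheaf $F\subset E$ of rank $1$ or rank $2$ violating the slope inequality $\mu(F)\geq\mu(E)=\deg K_X/3 = 6$. I would treat the two ranks separately. For a rank $1$ destabilizing subbundle $L$ of degree $d\geq 6$, the quotient $E/L$ is a rank $2$ bundle; for a rank $2$ destabilizing subsheaf, passing to its saturation and taking the rank $1$ quotient reduces to studying a line bundle quotient of degree at most $\deg K_X - d' = 18 - d'$. In each case I would apply Riemann--Roch on $X$ (of genus $10$) to the destabilizing line bundle and its Serre-dual complement, exactly as in the genus $7$ case, obtaining that such a line bundle would have to have an abnormally large number of sections. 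The genericity of $X$ then rules this out, because a general genus $10$ curve has Clifford index $\lfloor (g-1)/2\rfloor = 4$, so no line bundle contributes special sections beyond what Brill--Noether theory permits.

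The main obstacle, and the reason this is genuinely harder than the genus $7$ lemma, is the rank $3$ nature of $E$: a rank $3$ bundle can be destabilized by either a line subbundle or a rank $2$ subbundle, and the latter case does not reduce immediately to a single Riemann--Roch estimate. The delicate point will be to bound $h^0$ of the relevant rank $2$ quotient or subsheaf, which requires either a direct estimate on $X$ or an argument that the existence of such a subsheaf would force a special linear series on $X$ incompatible with the Brill--Noether genericity. I expect the cleanest route is to phrase the destabilizing data as a linear series computation: a destabilizing subbundle of $E$ corresponds to a subspace of $H^0(E)\cong V$ whose associated $3$-planes fail to move freely, and to show this contradicts the generic position guaranteed by the fact that $X\to LG(3,V)$ is a general linear section. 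I would isolate this rank $2$ case as the crux and handle the rank $1$ case by the verbatim Riemann--Roch argument of Lemma \ref{stable g=7}, adapted to the numerics $g=10$, $\operatorname{rk} E = 3$, $\deg E = 18$.
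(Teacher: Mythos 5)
Your plan reproduces the paper's skeleton (the invariants $\operatorname{rk}E=3$, $c_1(E)=K_X$, $h^0(E)=6$, and the split into rank~$1$ and rank~$2$ destabilizing subbundles), but both cases are missing the actual content. The central gap is your claim that the rank~$1$ case follows by the ``verbatim Riemann--Roch argument'' of Lemma~\ref{stable g=7}. It does not, and this is exactly where the paper works hardest. In genus $7$ the quotient of $E$ by a line subbundle $L$ is again a line bundle $M=K_X\otimes L^{*}$, so the two relations $h^0(L)+h^0(M)\geq h^0(E)$ and $h^0(L)-h^0(M)=d-g+1$ add up to force $h^0(L)\geq 2$. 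Here the quotient $F=E/L$ has rank $2$; Serre duality gives no upper bound on $h^0(F)$, and the two inequalities no longer close. The paper's rank~$1$ argument instead (i) proves $h^0(L)\geq 1$ by a wedge trick: if $h^0(L)=0$ then $H^0(E)\hookrightarrow H^0(F)$, so $F$ is globally generated by $6$ sections, and wedging a nowhere vanishing section with the other generators gives $h^0(c_1(F))\geq 5$, contradicting Riemann--Roch; and then (ii) uses the section of $L$ geometrically: it produces a hyperplane $W\subset V$ such that $X$ meets $G(3,W)$ in $d\geq 6$ points, and since $X$ lies on the K3 surface $S=\langle X\rangle\cap G(3,V)$ (the projection of the nodal K3 section), these points span at most a $\mathbb{P}^4$, which by geometric Riemann--Roch yields a $g^1_6$, against the hypothesis on $X$. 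Global generation of $E$, the K3 surface through $X$, and the position of $X\cap G(3,W)$ are essential inputs your plan never touches; without them the rank~$1$ case is not a Riemann--Roch computation.

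For the rank~$2$ case you correctly sense that something more is needed, but what you propose (``a subspace of $H^0(E)$ whose associated $3$-planes fail to move freely'') is a restatement of instability, not an argument. The paper's mechanism is concrete: the quotient line bundle $L=K_X\otimes c_1(F)^{*}$ has degree $\leq 6$, hence $h^0(L)\leq 1$ and $h^0(F)\geq 5$; by the rank~$1$ case (plus the pencil hypothesis) no line subbundle of $E$, hence of $F$, has two sections; therefore the kernel of $\bigwedge^2 H^0(F)\to H^0(c_1(F))$ contains no decomposable tensor, i.e.\ its projectivization misses $G(2,H^0(F))\subset\mathbb{P}^9$, which has codimension $3$, so the kernel has dimension at most $3$ and $h^0(c_1(F))\geq 7$, contradicting Riemann--Roch. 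Note that the logical order is the reverse of your assessment: the rank~$1$ case is the hard, geometric one, and the rank~$2$ case is deduced from it. Finally, the genericity you invoke cannot do the required work: $X$ is not a Brill--Noether general curve of genus $10$ (it lies on a K3 surface, and such curves form a divisor in moduli), and Clifford index $4$ does not exclude a $g^1_6$ --- indeed $\rho(10,1,6)=0$, so pencils of degree $6$ are not forbidden by Brill--Noether theory at all. The paper's proof runs on the explicit assumption that $X$ has no $g^1_6$ together with the special geometry of $X$ inside $LG(3,6)$; ``no low-degree pencils by generality'' is not an available substitute.
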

\begin{proof} The proof is completely analogous to \cite[Prop. 5]{M3l3}
\end{proof}
\begin{rem}
The bundles considered in Lemma \ref{stable g=10} were already constructed in a
different context by Kuznetsov in \cite{Kuz}.
\end{rem}

\begin{cor} \label{BN on curve of genus 10} Let $C$ be a general curve of genus
10 contained in a K3 surface.
Then, the non-abelian Brill-Noether locus $M(3,K_C,6)$ contains a surface $M_L$
of degree 6 which is the linear section
of the dual variety of $G_2$ by the dual space to the span of $C$. The
bundles corresponding to points of this
surface correspond to embeddings of $C$ into $LG(3,6)$.
\end{cor}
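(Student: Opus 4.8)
The plan is to mimic the proof of the genus-$7$ corollary, replacing the self-duality of the orthogonal Grassmannian by the geometry of the dual variety $\mathbb{G}_2^{\vee}$. Write $C=\mathbb{G}_2\cap\langle C\rangle$, where $\langle C\rangle\cong\mathbb{P}^{9}$ is the codimension-$4$ linear span cutting the $5$-fold $\mathbb{G}_2\subset\mathbb{P}^{13}$ down to the curve, and let $\langle C\rangle^{\perp}\cong\mathbb{P}^{3}\subset(\mathbb{P}^{13})^{\vee}$ be the dual space, whose points are the hyperplanes through $\langle C\rangle$. I set $M_L:=\mathbb{G}_2^{\vee}\cap\langle C\rangle^{\perp}$ and aim to embed it into $M(3,K_C,6)$.

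First I would build the dictionary between points of $M_L$ and the nodal linear spaces of Lemma \ref{stable g=10}. A general point of $M_L$ is a hyperplane $P\cong\mathbb{P}^{12}\supset\langle C\rangle$ lying on the open orbit of $\mathbb{G}_2^{\vee}$, so (as in the proof of Theorem \ref{z G2 w LG(3,6)}) it is tangent to $\mathbb{G}_2$ at a single point $p$ with nondegenerate tangent cone. Setting $H:=\langle C,p\rangle\cong\mathbb{P}^{10}$, the inclusions $\langle C\rangle\subset H\subset P$ together with $T_p\mathbb{G}_2\subset P$ force $\mathbb{G}_2\cap H$ to be singular at $p$; genericity of $C$ keeps this an ordinary node and places $p$ off $C$. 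This is exactly the hypothesis of Lemma \ref{stable g=10}, which then makes the pullback $E_H$ of the universal quotient bundle under $\pi_H\colon C\to LG(3,6)$ a stable rank-$3$ bundle with $c_1(E_H)=K_C$ and $h^0(E_H)=6$, hence a point of $M(3,K_C,6)$; by Theorem \ref{z G2 w LG(3,6)} these are precisely the bundles coming from embeddings $C\hookrightarrow LG(3,6)$. Conversely, a nodal $H$ as in Lemma \ref{stable g=10} determines a tangent hyperplane $P\supset H\supset\langle C\rangle$ through its node, so the correspondence is a bijection, and Proposition \ref{different embeddings} shows that $[H]\mapsto[E_H]$ is injective on $M_L$.

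It remains to identify $M_L$ as a surface of degree $6$. For this I would use that $\mathbb{G}_2^{\vee}$ is a hypersurface in $\mathbb{P}^{13}$ — this is the codimension-$1$ computation of the open orbit of nodal sections already carried out in the proof of Theorem \ref{z G2 w LG(3,6)} via the stabilizer inside the Lie algebra of $\mathbb{G}_2$ — and that this hypersurface is a sextic, which follows from the invariant theory of $\mathbb{G}_2$ (the Killing quadric occurring separately, as the locus carrying the special nodal sections of the $e=j$ type). Genericity of $C$ then makes $\langle C\rangle^{\perp}\cong\mathbb{P}^{3}$ meet $\mathbb{G}_2^{\vee}$ transversally, so $M_L$ is a surface with $\deg M_L=\deg\mathbb{G}_2^{\vee}=6$, and the injection above realizes it as a degree-$6$ surface inside $M(3,K_C,6)$.

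The main obstacle I expect is the geometric identification of the parameter space with $M_L$: establishing the precise bijection between the tangent hyperplanes parameterized by $\mathbb{G}_2^{\vee}\cap\langle C\rangle^{\perp}$ and the nodal $\mathbb{P}^{10}$'s of Lemma \ref{stable g=10} — in particular that a general such hyperplane yields a genuine node lying off $C$ and that no nodal $H$ is missed — together with pinning down $\deg\mathbb{G}_2^{\vee}=6$. The stability and injectivity inputs are already in hand from Lemma \ref{stable g=10} and Proposition \ref{different embeddings}, so the new content lies entirely in controlling the dual-variety section and its degree.
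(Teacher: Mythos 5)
Your overall route is the one the paper takes — its own proof is only two sentences, resting on exactly the scaffolding you assembled: points of $\mathbb{G}_2^{\vee}\cap\langle C\rangle^{\perp}$ give nodal linear sections through $\langle C\rangle$, Lemma \ref{stable g=10} gives stability of the pulled-back bundles, Proposition \ref{different embeddings} gives injectivity, and the degree-$6$ statement comes from the dual variety of $\mathbb{G}_2$ being a sextic hypersurface. That last fact, which you flag as the thing to ``pin down,'' is handled in the paper by citation (see \cite{Hol}, together with the observation that $\mathbb{G}_2$ is the adjoint variety of the simple Lie group $G_2$); no new argument is needed there, only a reference, so this part of your plan is fine as stated.

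The genuine gap is in your sentence ``by Theorem \ref{z G2 w LG(3,6)} these are precisely the bundles coming from embeddings $C\hookrightarrow LG(3,6)$.'' Theorem \ref{z G2 w LG(3,6)} gives only one direction: the projection of a nodal hyperplane section of $\mathbb{G}_2$ lands in $LG(3,6)$, so each point of $M_L$ yields an embedding. It says nothing about every embedding of $C$ into $LG(3,6)$ arising as such a projection, and your converse step only matches nodal spaces $H$ with tangent hyperplanes $P$ through $\langle C\rangle$ — it never addresses an arbitrary embedding. The paper closes this by repeating the reasoning of Remark \ref{uwaga}: given any embedding of $C$ into $LG(3,6)$, a general K3 surface $S$ arising as a linear section of $LG(3,6)$ through the image has Picard rank $2$ generated by the hyperplane class $H$ and a conic $Q$, with $C\in|H+Q|$; the system $|H+Q|$ maps $S$ into $\mathbb{G}_2$ contracting $Q$ to a node, so the given embedding is the projection of a nodal linear section of $\mathbb{G}_2$ containing $C$, i.e., it occurs in the construction and hence corresponds to a point of $M_L$. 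Without this (or some substitute) you have shown that $M(3,K_C,6)$ contains a sextic surface of stable bundles, each point giving an embedding, but not that this surface accounts for all embeddings of $C$ into $LG(3,6)$, which is part of the corollary as the paper proves it.
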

\begin{proof}
It follows from the fact that the variety dual to $G_2$ is a hypersurface of
degree 6 (see \cite{Hol}) and the fact that our
$G_2$ is the homogeneous space of the adjoint representation of the simple Lie
group $\mathbb{G}_2$. Repeating the reasoning
contained in Remark \ref{uwaga} we prove that in this construction we obtain all
embeddings of $C$ into $LG(3,6)$.
\end{proof}

We can view the surface as a non-abelian Brill-Noether locus of all quotient
Lagrangian bundles, but first we need to
prove that such moduli space indeed exists. Let us make a general argument.
Let $C$ be a curve of genus $g$. Let $E=\mathcal{O}_C^{2n}$, we say that a
quotient bundle $L$ of $E$ of rank $n$ on $C$
with $c_1(L)=K_C$ is Lagrangian if the image of the map associated to this
bundle to the Grassmannian $G(n,2n)$ is contained
in some $LG(n,2n)$. This is equivalent to saying that there is an exact
sequence:
$$0\longrightarrow L^{*}\longrightarrow E \longrightarrow L \longrightarrow 0$$
and a nondegenerate two form $\omega$ on $H^0(E)$ such that all fibers of
$L^{*}$ are isotropic with respect to $\omega$.
It follows that for a Lagrangian bundle the kernel of the map
$\phi_L\colon H^0(\bigwedge^2(E))\longrightarrow H^0(\bigwedge^2(L))$ is
non-empty. The set of bundles with this property
may be given the structure of a scheme as in \cite{MukBN} because it may be
regarded as the degeneracy locus of a map of
bundles. This scheme may be saturated with respect to the rank of the map. Now
the set of Lagrangian bundles is an open
subset of any component of the scheme of bundles for which $\phi$ has non-empty
kernel of chosen dimension. It follows that
there is a natural structure of scheme on the set of Lagrangian bundles. If $C$
is a curve of genus $10$ and $n=3$ we have
$h^0(C,\bigwedge^2(E))=h^0(C,\bigwedge^2(L))=15$. Hence our moduli is expected
to be a hypersurface in the Brill-Noether
locus $M(3,K_C,6)$ and in fact it is such by Corollary \ref{BN on curve of genus
10}.

\begin{cor} Let $(X,L)$ be a polarized K3 surface of genus 10.
The space $M_X(3,L,3)$ contains a smooth plane curve of degree 6 which is the
linear section of the dual variety of
$G_2$ by the dual space to the span of $X$. The bundles corresponding
to points of this curve correspond to
embeddings of $X$ into $LG(3,6)$.
\end{cor}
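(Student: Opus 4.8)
The plan is to follow the pattern of Corollary \ref{BN on curve of genus 10}, replacing the genus $10$ curve by the K3 surface and lowering every dimension by one. By Mukai's theorem a general polarized K3 surface $(X,L)$ of genus $10$ is a transverse linear section $X=\mathbb{G}_2\cap\Lambda$, where $\Lambda=\langle X\rangle\cong\mathbb{P}^{10}\subset\mathbb{P}^{13}$ is the span of $X$ in the adjoint embedding. The dual linear space $\Lambda^{\perp}\subset(\mathbb{P}^{13})^{*}$ is a $\mathbb{P}^{2}$ whose points are exactly the hyperplanes of $\mathbb{P}^{13}$ containing $X$. Since the projective dual $\mathbb{G}_2^{\vee}$ is a hypersurface of degree $6$ (see \cite{Hol}), its intersection $B:=\mathbb{G}_2^{\vee}\cap\Lambda^{\perp}$ is a plane curve of degree $6$; this is the dual linear section named in the statement. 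First I would record that, by genericity of $(X,L)$, the plane $\Lambda^{\perp}$ meets $\mathbb{G}_2^{\vee}$ only along its open orbit and transversally, so that $B$ is a smooth sextic.

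For each $p\in B$ the associated hyperplane $H_{p}\supset X$ cuts $\mathbb{G}_2$ in a nodal hyperplane section, with node $n_{p}$ lying on the open orbit of the dual variety. Following the general construction preceding Proposition \ref{different embeddings}, projection from $n_{p}$ defines an embedding $\pi_{p}\colon X\hookrightarrow LG(3,6)$, provided $n_{p}\notin X$; and that the image lands in $LG(3,6)$ rather than merely in $G(3,6)$ is exactly the content of Theorem \ref{z G2 w LG(3,6)}. The condition $n_p\notin X$ holds for general $X$, since as $p$ ranges over the curve $B$ the nodes $n_{p}$ sweep out at most a curve in $\mathbb{G}_2$, which a general two-dimensional section $\Lambda\cap\mathbb{G}_2$ misses for dimension reasons; note that $n_p\in\Lambda$ would force $n_p\in\Lambda\cap\mathbb{G}_2=X$, so $n_p\notin X$ already guarantees that $\pi_p$ restricts to a linear isomorphism on $\Lambda$.

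Next I would analyse the bundle $E_{p}:=\pi_{p}^{*}Q$ obtained by pulling back the universal quotient bundle $Q$ on $G(3,6)$. It is locally free of rank $3$, with $c_{1}(E_{p})=\pi_{p}^{*}\mathcal{O}_{G(3,6)}(1)=L$ (because $\pi_p$ is a linear isomorphism on $\Lambda$) and $h^{0}(E_{p})=6$, the six sections being precisely those defining $\pi_{p}$; a Riemann--Roch computation gives $c_{2}(E_{p})=9$, so that $E_{p}$ has Mukai vector $v=(3,L,3)$ with $v^{2}=L^{2}-18=0$. Hence $E_{p}$ defines a point of $M_{X}(3,L,3)$, which is therefore itself a K3 surface. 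For stability I would reduce to the curve case already settled in Lemma \ref{stable g=10}: a general $C\in|L|$ is a general genus $10$ curve that is itself a linear section of $\mathbb{G}_2$, the restriction $\pi_{p}|_{C}$ is the curve embedding of that lemma, and $E_{p}|_{C}$ is its stable bundle, so any subsheaf destabilising $E_{p}$ on $X$ would, after restriction to a general such $C$, destabilise $E_{p}|_{C}$, a contradiction. (Alternatively, stability is automatic since $v$ is primitive and isotropic.)

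Finally I would assemble $p\mapsto E_{p}$ into a morphism $B\to M_{X}(3,L,3)$, injective by Proposition \ref{different embeddings} since distinct $p$ give distinct spaces $H_p$ and hence distinct embeddings; thus $B$ sits inside the moduli space as a smooth plane sextic. Repeating the reasoning of Remark \ref{uwaga}, every embedding of $X$ into $LG(3,6)$, equivalently every Lagrangian quotient bundle in $M_{X}(3,L,3)$, arises from such a nodal hyperplane, so $B$ is exactly the locus of bundles yielding $LG(3,6)$-embeddings of $X$. The step I expect to be the main obstacle is the smoothness claim of the first paragraph: one must control which orbits of the $G_2$-action the plane $\Lambda^{\perp}$ actually meets, showing from the orbit description of \cite{RI},\cite{SK} that the non-nodal orbits have codimension at least $3$ in $(\mathbb{P}^{13})^{*}$ so that a general $\Lambda^{\perp}$ avoids $\operatorname{Sing}\mathbb{G}_2^{\vee}$ and crosses $\mathbb{G}_2^{\vee}$ transversally, together with the companion verification that the node $n_{p}$ never falls on $X$.
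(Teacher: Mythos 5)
Your proposal is correct and takes essentially the same route as the paper: the paper's entire proof is the one-line remark that everything except smoothness already follows from the preceding results (Theorem \ref{z G2 w LG(3,6)}, Proposition \ref{different embeddings}, Remark \ref{uwaga} and Corollary \ref{BN on curve of genus 10}), while smoothness of the sextic follows from genericity together with the fact that the singular set of the variety dual to $\mathbb{G}_2$ has codimension 2 --- exactly the point you single out as the main obstacle and resolve in the same way. One small blemish: your parenthetical alternative that stability of $E_p$ is ``automatic since $v$ is primitive and isotropic'' is not right as stated (a primitive isotropic Mukai vector controls the structure of the moduli space, not the stability of an individual sheaf), so you should keep only your main argument by restriction to a general curve in $|L|$, which is the reduction to Lemma \ref{stable g=10} that the paper uses implicitly.
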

\begin{proof}
The only thing that remains to be proved is the smoothness of the sextic. It 
follows from genericity and the fact that the singular set of the variety dual to $G_2$ is of codimension 2.
\end{proof}

\begin{cor} The moduli space $M_X(3,L,3)$ is a smooth K3 surface isomorphic to a
double cover of $\mathbb{P}^2$ branched over a sextic.
\end{cor}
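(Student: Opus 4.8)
The plan is to identify $\mathcal{M}:=M_X(3,L,3)$ as a moduli space of sheaves with a primitive isotropic Mukai vector and to read off its degree from the Mukai lattice. Writing the Mukai vector as $v=(3,L,3)$, primitivity is immediate because the middle coordinate $L$ is a primitive class on the general $X$, and isotropy follows from $\langle v,v\rangle=L^2-2\cdot 3\cdot 3=18-18=0$ (recall $L^2=2g-2=18$ for $g=10$). Thus, for $X$ general and a $v$-generic choice of polarization defining stability, Mukai's theorem (\cite{MukBN},\cite{Mukdual}) guarantees that $\mathcal{M}$ is a smooth projective K3 surface and provides a Hodge isometry $\theta_v\colon v^{\perp}/\mathbb{Z}v\xrightarrow{\sim}H^2(\mathcal{M},\mathbb{Z})$. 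This already yields the first assertion, that $\mathcal{M}$ is a smooth K3 surface.

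First I would compute the algebraic part of $v^{\perp}/\mathbb{Z}v$. For general $(X,L)$ we have $\mathrm{Pic}(X)=\mathbb{Z}L$, so the algebraic Mukai lattice is spanned by $e=(1,0,0)$, $\ell=(0,L,0)$ and $f=(0,0,1)$, with $\langle e,f\rangle=-1$, $\langle \ell,\ell\rangle=18$ and the remaining pairings among basis vectors zero. A direct check gives $\langle v,(1,0,-1)\rangle=0$, so $w:=(1,0,-1)$ lies in $v^{\perp}$, and $\{w,v\}$ is a $\mathbb{Z}$-basis of $v^{\perp}$; since $\langle w,v\rangle=0$ the pairing descends to the quotient and $\langle \bar w,\bar w\rangle=\langle w,w\rangle=2$. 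Hence $\mathrm{Pic}(\mathcal{M})\cong\mathbb{Z}\bar w$ with $\bar w^{2}=2$, that is, $\mathcal{M}$ is a K3 surface of degree $2$.

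To finish I would invoke the classical structure theory of degree-$2$ K3 surfaces: the ample generator $h$ with $h^{2}=2$ is base-point free and $|h|$ realizes $\mathcal{M}$ as a double cover $\pi\colon\mathcal{M}\to\mathbb{P}^{2}$ branched along a plane sextic. The genericity of $X$—together with the fact, already used in the preceding corollary, that the singular locus of $\mathbb{G}_2^{\vee}$ has codimension $2$—ensures this sextic is smooth. As a consistency check, the smooth plane sextic $C_6\subset\mathcal{M}$ produced in the preceding corollary satisfies $C_6^{2}=18=9h^{2}$, so $C_6\in|3h|$; it is therefore the ramification divisor of $\pi$, mapped isomorphically by $\pi$ onto the branch sextic.

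The main obstacle is verifying the hypotheses under which Mukai's theorem applies: primitivity and isotropy of $v$ are immediate from the computation above, but one must check that the polarization used for stability is $v$-generic (lies on no wall) and that Mukai's construction indeed yields the Hodge isometry $\theta_v$ via a (quasi-)universal family. Once these are in place, the lattice computation together with the double-plane description of degree-$2$ K3 surfaces completes the proof; in particular $h^{2}=2$ forces the double-plane structure, so no degenerate (unigonal or hyperelliptic) behaviour of $|h|$ can occur.
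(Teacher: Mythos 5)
Your proof is correct, but it follows a genuinely different route from the paper's. The paper's own proof is two lines: it quotes Mukai's theorem on moduli of sheaves on K3 surfaces (\cite{MukK3}) to conclude that $M_X(3,L,3)$ is a smooth K3 surface, and then uses the smooth plane sextic curve $C\subset M_X(3,L,3)$ constructed in the preceding corollary: $C$ has genus $10$ and carries a $g^2_6$, and by the Green--Lazarsfeld theorem this $g^2_6$ extends to a line bundle on the ambient K3 surface, giving the degree-two morphism to $\mathbb{P}^2$ branched over a sextic. Your argument is instead lattice-theoretic: you check that $v=(3,L,3)$ is primitive and isotropic, invoke Mukai's Hodge isometry $v^{\perp}/\mathbb{Z}v\cong H^2(\mathcal{M},\mathbb{Z})$, and compute (correctly: $w=(1,0,-1)$ together with $v$ is indeed a $\mathbb{Z}$-basis of the algebraic part of $v^{\perp}$) that $\mathrm{Pic}(\mathcal{M})=\mathbb{Z}\bar{w}$ with $\bar{w}^2=2$, after which Saint-Donat's analysis of degree-two K3 surfaces yields the double plane. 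Your route buys more: it is independent of the preceding corollary, and it establishes that $\mathcal{M}$ has Picard number one, which is exactly why no unigonal or fixed-component degeneration of $|h|$ can occur. The paper's route buys economy: no stability or wall-crossing hypotheses need checking, and the double-plane structure is tied concretely to the dual sextic of $\mathbb{G}_2$. Note that the stability issue you flag as the ``main obstacle'' is in fact harmless here and you could close it in one line: with $\mathrm{Pic}(X)=\mathbb{Z}L$, a destabilizing subsheaf of rank $r'\in\{1,2\}$ of a sheaf with vector $(3,L,3)$ would need $c_1=\frac{r'}{3}L$, which is not integral, so there are no strictly semistable sheaves and no walls.

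Two minor points. First, smoothness of the branch sextic requires no genericity or dual-variety input: a double cover of $\mathbb{P}^2$ branched along a singular sextic is itself singular, so smoothness of $\mathcal{M}$ already forces the branch curve to be smooth. Second, your closing consistency check contains a non-sequitur: from $C_6\in|3h|$ and $\pi|_{C_6}$ being an embedding one cannot conclude that $C_6$ is the ramification divisor; a priori $C_6$ could satisfy $\iota(C_6)\neq C_6$ (where $\iota$ is the covering involution), in which case $C_6$ and $\iota(C_6)$ both map isomorphically onto one and the same plane sextic distinct from the branch curve. Since that remark is not load-bearing, it does not affect the validity of your proof.
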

\begin{proof} By \cite{MukK3} the moduli space $M_X(3,L,3)$ is a smooth K3
surface. Let $C$ be the plane sextic
curve on the K3 surface $M_X(3,L,3)$. Then $C$ is of genus 10 and admits a
$g^2_6$. By Green and Lazarsfeld theorem
the $g^2_6$ extends to the K3 surface giving a map to $\mathbb{P}^2$.
\end{proof}
For a more detailed description of the above Moduli space see \cite{M3l3}.
\begin{rem}
By considering compositions of the studied projections we obtain different
subvarieties of suitable Moduli spaces or
Brill-Noether loci.
\end{rem}
\section*{Acknowledgments}
The work was initiated during the author's stay at the University of Oslo between
03.2009 and 03.2010. I would like to especially
thank Kristian Ranestad for suggesting me the subject, enlightening discussions
and wonderful hospitality. I acknowledge also
G. Kapustka, J. Buczy\'nski, J. Weyman, L. Manivel and A. Kuznetsov for remarks
and advices. Finally I would like to express my gratitude to the referee who made many crucial suggestions.

\end{section}

\end{document}